\definecolor{verylight}{gray}{0.97}
\definecolor{light}{gray}{0.9}
\definecolor{medium}{gray}{0.85}
\definecolor{dark}{gray}{0.6}
\def\frk{\frak}               
\def\mm{{\frk m}}
\def\Phi{{\frk n}}
\def\Phi{{\frk N}}
\def\MM{{\mathcal M}}
\def\opn#1#2{\def#1{\operatorname{#2}}} 
\opn\chara{char} \opn\length{\ell} \opn\pd{pd} \opn\rk{rk}
\opn\projdim{proj\,dim} \opn\injdim{inj\,dim} \opn\rank{rank}
\opn\depth{depth} \opn\grade{grade} \opn\height{height}
\opn\embdim{emb\,dim} \opn\codim{codim}
\opn\Tr{Tr} \opn\bigrank{big\,rank}
\opn\superheight{superheight}\opn\lcm{lcm}
\opn\trdeg{tr\,deg}
\opn\reg{reg} \opn\lreg{lreg} \opn\ini{in} \opn\lpd{lpd}
\opn\size{size}\opn\bigsize{bigsize}
\opn\cosize{cosize}\opn\bigcosize{bigcosize}
\opn\sdepth{sdepth}\opn\sreg{sreg}
\opn\link{link}\opn\fdepth{fdepth}
\opn\rank{rank}
\opn\Deg{Deg}
\opn\msupp{msupp}
\opn\div{div} \opn\Div{Div} \opn\cl{cl} \opn\Cl{Cl}
\let\epsilon\varepsilon
\let\phi=\varphi
\let\kappa=\varkappa
\opn\Spec{Spec} \opn\Supp{Supp} \opn\supp{supp} \opn\Sing{Sing}
\opn\Ass{Ass} \opn\Min{Min}\opn\Mon{Mon} \opn\dstab{dstab} \opn\astab{astab}
\opn\Syz{Syz}
\opn\Ann{Ann} \opn\Rad{Rad} \opn\Soc{Soc}
\opn\Im{Im} \opn\Ker{Ker} \opn\Coker{Coker} \opn\Am{Am}
\opn\Hom{Hom} \opn\Tor{Tor} \opn\Ext{Ext} \opn\End{End}
\opn\Aut{Aut} \opn\id{id}
\opn\nat{nat}
\opn\pff{pf}
\opn\Pf{Pf} \opn\GL{GL} \opn\SL{SL} \opn\mod{mod} \opn\ord{ord}
\opn\Gin{Gin} \opn\Hilb{Hilb}\opn\sort{sort}
\opn\initial{init}
\opn\ende{end}
\opn\height{height}
\opn\type{type}
\opn\set{set}
\opn\aff{aff} \opn\con{conv} \opn\relint{relint} \opn\st{st}
\opn\lk{lk} \opn\cn{cn} \opn\core{core} \opn\vol{vol}
\opn\link{link} \opn\star{star}\opn\lex{lex}
\opn\gr{gr}
\def\pot#1#2{#1[\kern-0.28ex[#2]\kern-0.28ex]}
\opn\dirlim{\underrightarrow{\lim}}
\opn\inivlim{\underleftarrow{\lim}}
\def\Implies{\ifmmode\Longrightarrow \else
        \unskip${}\Longrightarrow{}$\ignorespaces\fi}
\def\implies{\ifmmode\Rightarrow \else
        \unskip${}\Rightarrow{}$\ignorespaces\fi}
\def\iff{\ifmmode\Longleftrightarrow \else
        \unskip${}\Longleftrightarrow{}$\ignorespaces\fi}
 \theoremstyle{plain}
\newtheorem{Theorem}{Theorem}[section]
\newtheorem*{Theorem*}{Theorem}
 \newtheorem{Lemma}[Theorem]{Lemma}
 \newtheorem{Corollary}[Theorem]{Corollary}
 \newtheorem{Proposition}[Theorem]{Proposition}
 \theoremstyle{definition}
 \newtheorem{Definition}[Theorem]{Definition}
 \newtheorem{Remark}[Theorem]{Remark}
 \newtheorem{Remarks}[Theorem]{Remarks}
 \newtheorem{Example}[Theorem]{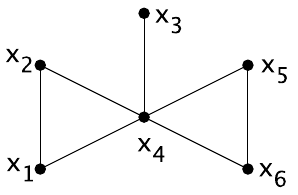}
\let\epsilon\varepsilon
\let\kappa=\varkappa
\opn\dis{dis}
\def\pnt{{\raise0.5mm\hbox{\large\bf.}}}
\opn\Lex{Lex}
\begin{document}
\title{On the matroidal path ideals}
\author {Mehrdad Nasernejad, Kazem Khashyarmanesh, Ayesha Asloob Qureshi}

\address{Kazem Khashyarmanesh, Mehrdad Nasernejad, Department of Pure Mathematics, Ferdowsi University of Mashhad, P.O.Box 1159-91775, Mashhad, Iran}\email{ m$\_$nasernejad@yahoo.com, khashyar@ipm.ir}

\address{Ayesha Asloob Qureshi, Sabanc\i \; University, Faculty of Engineering and Natural Sciences, Orta Mahalle, Tuzla 34956, Istanbul, Turkey}\email{aqureshi@sabanciuniv.edu}


\begin{abstract}
We prove that the set of all paths of a fixed length in a complete multipartite graph is the bases of a matroid. Moreover, we discuss the Cohen-Macaulayness and depth of powers of $t$-path ideals of a complete multipartite graph. 
\end{abstract}

\thanks{The third author was funded by the project 118F321 under the program 2509 of the Scientific and Technological Research Council of Turkey(TUBITAK).}

\subjclass{Primary 05B35, 05E40; Secondary 13C13}
\keywords{Veronese ideals, depth of powers of ideals, matroids, polymatroidal ideals, complete multipartite graphs}

\maketitle

\section*{Introduction}


Matroid theory is one of the most attractive areas in combinatorics and is deeply rooted in graph theory and linear algebra. The well-known examples of matroids include uniform matroids, graphical matroids and linear matroids. We refer to \cite{W}, \cite{O} and \cite{HHdis} for basic definitions and notions in matroid theory. The polymatroids originated in \cite{E} and the {\em discrete polymatroids} appeared in \cite{HHdis} as a multiset analogue of matroids. In commutative algebra, the matroidal ideals as the squarefree version of polymatroidal ideals, hold a very special place due to their nice algebraic and homological properties. A monomial ideal $I \subset K[x_1, \ldots, x_n]$ is called {\em polymatroidal}, if the set of exponent vectors of the minimal generating set of $I$ corresponds to the set of bases of a discrete polymatroid. In particular, a squarefree polymatroidal ideal is simply referred to as a {\em matroidal} ideal, because in this case, the set of exponent vectors of the minimal generating set of $I$ corresponds to the set of bases of a matroid. The algebraic and homological properties of polymatroidal ideals have been studied by many authors, for example, see \cite{HY}, \cite{HRV}, \cite{HH}, \cite{BH}, \cite{HQ}. It is known from \cite{CH} and \cite{HY} that the product of polymatroidal ideals is again polymatroidal. Moreover, polymatroidal ideals have linear quotients, and hence linear resolutions, see \cite{CH}. In particular, all powers of a polymatroidal ideal have linear resolutions. If $I$ is a polymatroidal ideal, then the Rees algebra $\mathcal{R}(I)$ is normal, see \cite[Proposition 3.11]{V}, \cite[Theorem 3.4]{HRV}. Furthermore, polymatroidal ideals are known to have the persistence property, that is, $\Ass(I^k)\subset \Ass(I^{k+1})$ for all $k$, see \cite[Proposition 3.3]{HRV}, and they even have strong persistence property, that is, $I^{k+1}:I=I^k$ for all $k$, see \cite[Proposition 2.4]{HQ}.

In this paper, we introduce another class of matroidal ideals, namely, the class of the $t$-path ideals of complete multipartite graphs. Let $G$ be a finite simple graph with $n$ vertices.  We refer to a path of length $t-1$ in $G$ as a $t$-path. Let $S$ be a polynomial ring over a field $K$ in $n$ variables.  To simplify the notation, throughout this text, we identify the vertices of $G$ with the variables in $S$.  The {\em $t$-path ideal} $I_t(G) \subset S=K[x_1, \ldots, x_n]$ is generated by those monomials $x_{i_1}\cdots x_{i_t}$ such that $x_{i_1}, \ldots, x_{i_t}$ is a $t$-path in $G$. The $t$-path ideals of graphs were introduced by Conca and De Negri in \cite{CN} and later on discussed by several authors for different classes of graphs, in particular for directed trees, trees and cycles, see \cite{HT1}, \cite{BHK}, \cite{CGMPT}. When $t=2$, then $I_2(G)$ is the {\em edge ideal} of $G$ which is usually denoted as $I(G)$. Let $K_{n_1, \ldots, n_r}$ be the complete $r$-partite graph with $r$ partition sets of sizes $n_1, \ldots, n_r$. The main result of this paper is\\

\textbf{Theorem \ref{Th.3}.} \;
Let $t \geq 2$. If $I_{t}(K_{n_1,\ldots,n_r}) \neq 0$, then it  is a matroidal ideal. 
\newline

In other words, let $t \geq 2$ and $\MM_t(G)$ be a collection of subsets of $V(G)$ such that $A \in \MM_t(G)$ if and only if there exists a $t$-path in $G$ with vertex set $A$. Then it follows from Theorem~\ref{Th.3} that if $\MM_t(K_{n_1, \ldots, n_r})\neq \emptyset$, then it is the set of bases of a matroid. Up to our knowledge there is not much known about $t$-path ideals. In our case, after establishing Theorem~\ref{Th.3}, the $t$-path ideals of a complete $r$-partite graph inherit all the nice properties of matroidal ideals. 

The contents of this paper are distributed as following: in Section~\ref{prem}, the definitions and basic facts about $t$-paths and $t$-path ideals of complete $r$-partite graphs are provided. The Section~\ref{matroidpath} is devoted to establish the matroidal property of $I_t(K_{n_1, \ldots, n_r})$ in Theorem~\ref{Th.3}. It follows from  \cite[Theorem 3.2]{CW} and \cite[Theorem 2.2]{AY} that an edge ideal $I(G)$ is matroidal if and only if $G$ is a complete $r$-partite graph. In Theorem~\ref{Th.2}, we give another proof of this fact. 
It is natural to ask that if the $t$-path ideal of a graph is matroidal for some $t$, then does it follow that its non-zero $k$-path ideal is also matroidal for $k \geq t$? In Example~\ref{exp2}, we provide a graph whose 3-path ideal is matroidal but $4$-path ideal is not matroidal.

The identification of $I_t(K_{n_1, \ldots, n_r})$ as a matroidal ideal, helps tremendously to study its algebraic properties. In Section~\ref{sec:CM}, the Cohen-Macaulay property of $I_t(K_{n_1, \ldots, n_r})$ is discussed. It is known from \cite[Theorem 4.2]{HH} that a matroidal ideal is Cohen-Macaulay if and only if it is a principal ideal or a squarefree Veronese ideal. It remains to investigate for which $t$ and under which conditions on $n_1, \ldots, n_r$, the ideal $I_t(K_{n_1, \ldots, n_r})$ is squarefree Veronese. As a main result of Section~\ref{sec:CM}, we prove the following:\\

\textbf{Theorem \ref{CM}.} \; The $t$-path ideal $I_t(K_{n_1, \ldots, n_r}) \neq 0$ is Cohen-Macaulay if and only if $n_i \leq \lceil t/2\rceil$, for each $i=1, \ldots, r$. 
\newline

The stability indices related to associated primes and depth of powers of polymatroidal ideals are of particular interest and have been discussed in several papers, for example, see \cite{HRV,HQ,HV}. 
The limit depth and index of depth stability is computed for $t$-path ideals of complete bipartite graphs in Theorem~\ref{bipartite}, and for 3-path ideals of complete $r$-partite graphs in Theorem~\ref{v>5}. All of these concepts are defined and discussed in Section~\ref{limdepth}. From Theorem~\ref{bipartite} and Theorem~\ref{v>5}, it is evident that there is no uniform description for the depth of the powers of $I_t(K_{n_1, \ldots, n_r})$. However, in Theorem~\ref{t=4},  it is observed that if $r \geq 3$ and all of the $n_i$'s are bigger than or equal to $\lceil t/2 \rceil$, then the limit depth of $I_t(K_{n_1, \ldots, n_r})$ is 0.

\section{Preliminaries}\label{prem}

First we recall some basis definition and notion related to matroids. Let $[n]=\{1,\ldots,n\}$ and $P([n])$ denote the set of all subsets of $[n]$. For any $S \subseteq [n]$, the cardinality of $S$ is denoted by $|S|$. A {\em matroid} $\mathcal{M}$ on the ground set $[n]$ is a non-empty collection of subsets of $[n]$ satisfying the following properties:

\begin{enumerate}
\item if $A \in \mathcal{M}$ and $B \subset A$, then $B \in \mathcal{M}$;
\item if $A,B \in \mathcal{M}$ and $|B| \leq |A|$, then there exists $a\in A \setminus B$ such that $B\cup\{a\} \in \mathcal{M}$.
\end{enumerate}

A maximal element (with respect to inclusion) in $\mathcal{M}$ is called a {\em base}. Condition (2) can be used to show that all bases of $\mathcal{M}$ have the same cardinalities. Let $\mathcal{B}$ be the set containing all the bases of $\mathcal{M}$, then $\mathcal{B}$ is distinguished by the following ``exchange property".
\begin{enumerate}
\item[{(EP)}]  For any $A,B \in \mathcal{B}$, if $a \in A\setminus B$, then there exists $b\in B \setminus A$ such that\\ $(A\setminus\{a\} )\cup\{b\} \in \mathcal{B}$.
\end{enumerate}

Given any subset $\mathcal{C}$ of $P([n])$, there exists a matroid on $[n]$ with $\mathcal{C}$ as its sets of bases if and only if $\mathcal{C}$ satisfies the exchange property. 


Let $K$ be a field and $S=K[x_1,\ldots, x_n]$ be a polynomial ring over $K$. Let $I$ be a monomial ideal in $S$. The set $\mathcal{G}(I)$ denotes the unique minimal set of monomial generators of $I$. For a given ${\bf a}=(a_1,\ldots, a_n)$ with non-negative entries, ${\bf x}^{\bf a}$ denotes the monomial $x_1^{a_1}\cdots x_n^{a_n}$ in $S$. 
A monomial ideal $I \subset S$ generated in a single degree is called {\em polymatroidal} if for all monomials ${\bf x}^{\bf a}, {\bf x}^{\bf b}\in \mathcal{G}(I)$ with $a_i>b_i$, there exists $j$ with $a_j<b_j$ such that $x_j({\bf x}^{\bf a}/x_i )\in \mathcal{G}(I)$. A squarefree polymatroidal ideal is simply referred to as a {\em matroidal} ideal. In other words, a monomial ideal $I$ in $S$ is matroidal, if $\mathcal{G}(I)$ can be identified as a set of bases of a matroid.  




Next we recall some basic definitions and notions from graph theory. All graphs considered in this paper will be simple, undirected and finite. Let $G$ be a graph with the vertex set $V(G)$ and the edge set $E(G)$.  A {\em path} in $G$ is a sequence of distinct vertices $x_{i_1},\ldots, x_{i_{t}}$ such that $\{x_{i_j}, x_{i_{j+1}}\}\in E(G)$ for $j=1,\ldots,t-1$.  The length of a path is the number of edges in it. We will refer to a path of length $t-1$ as a $t$-path. In other words, a $t$-path in $G$ is a path with $t$ vertices. A subset $A \subseteq V(G)$ is called {\em independent} if no vertices in $A$ are adjacent in $G$. 

A graph $G$ is called {\em complete r-partite}, if $V(G)$ can be partitioned into $r$ independent sets $V_1, \ldots, V_r$ such that $a$ and $b$ are adjacent for all $a\in V_i$ and all $b\in V_j$ with $1 \leq i \neq j \leq r$. Such a partition is called an {\em $r$-partition} of $G$. Moreover, if $|V_i|=n_i$ for all $i=1, \ldots, r$, then the complete $r$-partite graph is denoted by $K_{n_1,\ldots,n_r}$.



Let $G$ be a graph with $|V(G)|=n$. As mentioned in the introduction, we identify the vertices of the graph $G$ with the variables in the polynomial ring $S = K[x_1, \ldots, x_n]$.  
 \begin{Definition}
The {\it $t$-path ideal} of $G$, denoted by $I_t(G)$, is 
\[
I_t(G) : = (x_{i_1}\cdots x_{i_{t}} \ : x_{i_1},\ldots, x_{i_{t}} \text{ is a $t$-path in $G$}) \subset S.
\]
 \end{Definition}
 When $t=2$, then $I_2(G)$ is simply the {\em edge ideal}  $I(G)$ of $G$. If there is no $t$-path in $G$, then we set $I_t(G)=0$. 
 
 The following example illustrates the definitions that have been stated above.
\begin{Example}\label{exp1}
Let $t=4$ and $K_{1,2,3}$ be as shown in Figure~\ref{exp1} with partition sets $\{x_1\}$, $\{x_2,x_3\}$ and $\{x_4,x_5,x_6\}$. 
\begin{figure}[htbp]\label{exp1}
\includegraphics[width = 5cm]{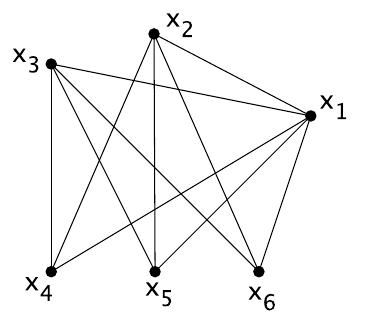}
\caption{ The graph $K_{1,2,3}$}
\end{figure}
Then $I_4(K_{1,2,3})$ has the following generators:
\[
x_1x_2x_3x_4,x_1x_2x_3x_5,x_1x_2x_3x_6, x_1x_2x_4x_5,x_1x_2x_4x_6,x_1x_2x_5x_6, \]\[x_1x_3x_4x_5, x_1x_3x_4x_6, x_1x_3x_5x_6, x_2x_3x_4x_5,x_2x_3x_4x_6,x_2x_3x_5x_6.
\]
\end{Example}

Recall  that if  $u=x_{1}^{a_{1}}\cdots x_{n}^{a_{n}}$ is  a
monomial in $S=K[x_{1},\ldots ,x_{n}]$, then the {\em support} of $u$ is given by $\supp(u)=\{x_i : ~a_{i}>0\} $. We set $\deg_{x_i}u=a_i$, for all $i=1, \ldots, n$.
For a monomial ideal $I\subset S$, the {\em support} of $I$ is $\supp(I)=\bigcup_{u \in \mathcal{G}(I)}\supp(u)$. If $\supp(I)=\{x_1, \ldots, x_n\}$, then $I$ is said to be {\em fully supported}. If $\mathcal{G}(I)=\{u_1, \ldots, u_m\}$, then $\gcd(I)=\gcd(u_1, \ldots, u_m)$.

Below we give a list of remarks that will be used frequently throughout the paper. For any integer $a$, the notations $\lceil a  \rceil$ and $\lfloor a \rfloor$ denote the ceiling and floor functions of $a$, respectively. Let $G=K_{n_1, \ldots, n_r}$ and $V_1, \ldots, V_r$ be the $r$-partition of $G$. Then we have the following. 

\begin{Remarks} \label{Rem.1}
Let $P:x_{i_1},x_{i_2},\ldots,x_{i_{t}}$  be a $t$-path in $G$, and $u=x_{i_1}x_{i_2}\cdots x_{i_{t}} \in \mathcal{G}(I_t(G))$. If $x_{i_j} \in V_s$ for some $1 \leq s \leq r $ and $2\leq j \leq t-1$, then $x_{i_{j-1}}, x_{i_{j+1}}\notin V_s$. This leads to the following conclusions:
\begin{enumerate}
\item[{(i)}]  A $t$-path in $G$ can have at most $\lceil t/2\rceil$ vertices in $V_s$.
\item[{(ii)}]  Let $x_{i_j} \in V_s$ for some $1\leq j \leq t$. If there exists $x_k \in V_s$ such that $x_k \notin V(P)$, then $x_{i_j}$ can be replaced by $x_k$ in $P$ to obtain a new $t$-path $P'$ in $G$. In particular, $(u/{x_{i_j}})x_k \in \mathcal{G}(I_t(G))$.
\item[{(iii)}] If there exists some $V_s$ such that $V(P) \cap V_s = \emptyset$, then any $x_{i_j} \in V(P)$ can be replaced in $P$ by any $x_k \in V_s$, to obtain a new path $P'$ in $G$. In particular, $(u/x_{i_j})x_k \in \mathcal{G}(I_t(G))$, for all $x_k \in V_s$ and for each $j=1, \ldots, t$. 
\item[{(iv)}] If $I_t(G)\neq 0$, then $I_t(G)$ is fully supported. This can be easily seen due to statements (ii) and (iii). 
\item[{(v)}] Let $I_t(G)\neq 0$. Then  $I_t(G)$ is a principal ideal if and only if $t=n_1+\cdots+n_r$. This also follows from statements (ii) and (iii). 
\end{enumerate}
\end{Remarks}

\section{The Matroidal path ideals of graphs}\label{matroidpath}

It is proved in \cite[Theorem 1.1]{BH} that a monomial ideal $I \subset S=K[x_1,\ldots,x_n]$ is polymatroidal if and only if $I:u$ is polymatroidal for all monomials $u$ in $S$. In particular, $I:x_i$ is polymatroidal if $I$ is polymatroidal. The following theorem gives further information about $I:x_i$.  
 \begin{Theorem} \label{Th.1}
 Let $I\subset S=K[x_1,\ldots,x_n]$ be a   polymatroidal ideal    and $1\leq i \leq n$. Let $I=\sum_{j=0}^d I_jx^{j}_i$, where $d=\max\{\deg_{x_i}u: u \in \mathcal{G}(I)\}$ and
 $I_j=(u/x^{j}_i: u\in \mathcal{G}(I), ~ \mathrm{deg}_{x_i}u=j)$ for each $j=0, \ldots, d$. Then $I_0 \subseteq I_1 \subseteq \cdots \subseteq I_d$. 
  \end{Theorem}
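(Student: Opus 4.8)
The plan is to reduce the whole chain to the single inclusion $I_j\subseteq I_{j+1}$ for each $0\leq j\leq d-1$, which is trivial when $I_j=0$. So assume $I_j\neq 0$ and take a generator $v\in\mathcal{G}(I_j)$; by definition $v=u/x_i^{\,j}$ for some $u\in\mathcal{G}(I)$ with $\deg_{x_i}u=j$. I claim it suffices to produce a generator $u'\in\mathcal{G}(I)$ with $\deg_{x_i}u'=j+1$ and $u'\mid x_iu$. Indeed, since $\deg_{x_i}u'=j+1=\deg_{x_i}(x_iu)$, the quotient $(x_iu)/u'$ involves no $x_i$, and
\[
v=\frac{u}{x_i^{\,j}}=\frac{x_iu}{x_i^{\,j+1}}=\frac{u'}{x_i^{\,j+1}}\cdot\frac{x_iu}{u'},
\]
so $v$ is a multiple of the generator $u'/x_i^{\,j+1}\in\mathcal{G}(I_{j+1})$; hence $v\in I_{j+1}$ and $\mathcal{G}(I_j)\subseteq I_{j+1}$.

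To construct $u'$ I would run a descent driven by the polymatroidal exchange property. Since $d>j$, the set $W=\{w\in\mathcal{G}(I):\deg_{x_i}w\geq j+1\}$ is nonempty, as it contains a generator of $x_i$-degree $d$. For $w\in W$ record its failure to divide $x_iu$ by the \emph{overflow} $e(w)=\sum_k\max\{0,\deg_{x_k}w-\deg_{x_k}(x_iu)\}$, and choose $w\in W$ with $e(w)$ minimal. If $e(w)=0$ then $w\mid x_iu$; combined with $\deg_{x_i}w\geq j+1=\deg_{x_i}(x_iu)$ this forces $\deg_{x_i}w=j+1$, so $u'=w$ works. The point is to rule out $e(w)>0$: in that case there is a variable $x_k$ with $\deg_{x_k}w>\deg_{x_k}(x_iu)\geq\deg_{x_k}u$, and applying the exchange property to the pair $w,u$ at the index $k$ yields an index $l$ with $\deg_{x_l}w<\deg_{x_l}u$ and $w':=x_l(w/x_k)\in\mathcal{G}(I)$.

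The main obstacle is the bookkeeping showing that this single exchange keeps us inside $W$ while strictly lowering $e$. First, $l=i$ is impossible, since it would force $\deg_{x_i}w<\deg_{x_i}u=j$, contradicting $\deg_{x_i}w\geq j+1$; thus the $x_i$-degree can only change at the index $k$. If $k\neq i$ it is unchanged; if $k=i$ then the overflow at $i$ gives $\deg_{x_i}w>\deg_{x_i}(x_iu)=j+1$, so $\deg_{x_i}w'=\deg_{x_i}w-1\geq j+1$. Hence $w'\in W$ in every case. Second, deleting $x_k$ lowers the positive $x_k$-term of $e$ by exactly one, while inserting $x_l$ leaves the $x_l$-term at zero, because $l\neq i$ forces $\deg_{x_l}w<\deg_{x_l}u=\deg_{x_l}(x_iu)$; therefore $e(w')=e(w)-1$, contradicting minimality. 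As $e$ is a nonnegative integer, this descent must reach $e(w)=0$, which produces the desired $u'$ and completes the proof. As a shortcut, one could instead invoke the symmetric exchange property of discrete polymatroids from \cite{HHdis}, applied to a generator of $x_i$-degree $d$ and to $u$, obtaining $u'=x_iu/x_l\in\mathcal{G}(I)$ in a single step.
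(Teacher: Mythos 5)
Your proof is correct, but it takes a genuinely different route from the paper's. The paper argues via colon ideals: it writes $I=I_0+x_iJ_1$ with $J_1=\sum_{j\geq 1}I_jx_i^{j-1}$, invokes the proof of Bandari--Herzog \cite[Theorem 1.1]{BH} to conclude that $I_0\subseteq J_1$ and that $(I:x_i)=J_1$ is again polymatroidal, extracts $I_0\subseteq I_1$ from $I_0\subseteq J_1$ by comparing $x_i$-degrees, and then iterates the same step on $J_1=I_1+x_iJ_2$, $J_2=I_2+x_iJ_3$, and so on. You instead prove each adjacent inclusion $I_j\subseteq I_{j+1}$ directly from the single-degree exchange axiom, by minimizing your ``overflow'' function $e$ over the set $W$ of generators of $x_i$-degree at least $j+1$ and showing that a positive overflow can always be strictly decreased by one exchange while staying in $W$ (your checks that $l\neq i$, that $w'\in W$ even when $k=i$, and that the $x_l$-term of $e$ stays zero are exactly the points where such arguments usually go wrong, and they are all handled correctly). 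What each approach buys: the paper's proof is shorter on the page and yields the polymatroidality of $(I:x_i)$ as a byproduct, but it leans on an external proof and its iteration is left at the level of ``continuing this procedure''; yours is self-contained, needs no colon ideals, and makes every step explicit. Your closing observation is also right and is arguably the cleanest proof of all: the symmetric exchange property for discrete polymatroids from \cite{HHdis}, applied to $u$ and any generator of $x_i$-degree $d$, produces $u'=x_iu/x_l\in\mathcal{G}(I)$ in one step, from which $u/x_i^j=(u'/x_i^{j+1})\cdot(x_iu/u')\in I_{j+1}$ immediately.
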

 \begin{proof}
 If $d=0$, then $I=I_0$ and there is nothing to prove. Let $d \geq 1$ and set 
 $J_1:= \sum_{j=1}^d I_jx^{j-1}_i$. It is easy to see that $I=I_0+x_iJ_1$ and $(I:x_i)=I_0+J_1$. 
 Following the proof of \cite[Theorem 1.1]{BH}, one can deduce that  $I_0 \subseteq J_1$, and hence $(I:x_i)=J_1$ is a polymatroidal ideal. Now, on account of $I_0= (u: u\in \mathcal{G}(I), ~ \mathrm{deg}_{x_i}u=0)$ and $I_1=(u/x_i: u\in \mathcal{G}(I), ~ \mathrm{deg}_{x_i}u=1)$, we obtain $I_0 \subseteq I_1$. If $d=1$, then we are done. Otherwise, if $d\geq 2$, we set $J_2:= \sum_{j=2}^d I_jx^{j-2}_i$. Thus, $J_1=I_1+x_iJ_2$. Once again, from the proof of \cite[Theorem 1.1]{BH} we obtain that $I_1 \subseteq J_2$, and thus  $I_1 \subseteq I_2$.  Therefore,   $I_0\subseteq I_1 \subseteq I_2$.
 Now, set  $J_k=\sum_{j=k}^d I_jx^{j-k}_i$, where $1\leq k \leq d$. By continuing this procedure, we can conclude that 
  $I_0 \subseteq I_1 \subseteq \cdots \subseteq I_d$, as required.  
 \end{proof}
If $I \subset K[x_1,\ldots,x_n]$ is a matroidal ideal, then $\max\{\deg_{x_i}u: u \in \mathcal{G}(I)\}$ is either 0 or 1. For each variable $x_i$, we set $I_{0,i}=(u: x_i \notin \supp(u))$ and $I_{1,i}=(u/x_i: x_i \in \supp(u))$. Then, we have $I=I_{0,i}+x_iI_{1,i}$, for all $i=1, \ldots, n$. With this notation, we have the following specialization of Theorem~\ref{Th.1}.

\begin{Proposition} \label{Pro.1}
 Let $I\subset K[x_1,\ldots,x_n]$ be a fully supported matroidal ideal generated in degree $d$ with $\gcd(I)=1$. We have the following:
 \begin{enumerate}
 \item[{\em(i)}] for each $i=1, \ldots, n$ we have $\mathcal{G} (I_{1, i}) \subseteq \cup_{t=1, ~t \neq i}^n \mathcal{G} (I_{1, t})$. 
\item[{\em(ii)}]  if $I_{1,i}\subseteq  I_{1,j}$ for some $1\leq i, j \leq n$ with $i\neq j$. Then $I_{1,i}= I_{1,j}$.
\end{enumerate}
  \end{Proposition}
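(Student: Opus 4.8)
The plan is to translate everything into the combinatorics of the matroid $M$ whose bases are $\{\supp(u) : u\in\mathcal G(I)\}$ (this is a matroid since $I$ is matroidal) and to use the single‑degree exchange property recorded in the Introduction. Throughout I write $A_i=\{u\in\mathcal G(I): x_i\mid u\}$, so that $\mathcal G(I_{1,i})=\{u/x_i : u\in A_i\}$ and every element of $\mathcal G(I_{1,i})$ is a squarefree monomial of degree $d-1$ not divisible by $x_i$.

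For (i), fix $v\in\mathcal G(I_{1,i})$ and write $v=u/x_i$ with $u\in A_i$. Since $\gcd(I)=1$, the variable $x_i$ does not divide every generator, so there is $u'\in\mathcal G(I)$ with $x_i\nmid u'$. Applying the exchange property to $u,u'$ at the coordinate $i$, where $\deg_{x_i}u=1>0=\deg_{x_i}u'$, produces an index $k$ with $\deg_{x_k}u<\deg_{x_k}u'$, hence $x_k\nmid u$ and $k\neq i$, such that $x_k(u/x_i)=vx_k\in\mathcal G(I)$. As $v\mid u$ and $x_k\nmid u$, the monomial $vx_k$ is squarefree with $x_k$ in its support, so $v=(vx_k)/x_k\in\mathcal G(I_{1,k})$ with $k\neq i$. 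This gives $\mathcal G(I_{1,i})\subseteq\bigcup_{t\neq i}\mathcal G(I_{1,t})$.

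For (ii), since $I_{1,i}$ and $I_{1,j}$ are both generated in degree $d-1$, the inclusion $I_{1,i}\subseteq I_{1,j}$ is equivalent to $\mathcal G(I_{1,i})\subseteq\mathcal G(I_{1,j})$. The first step is to show that no generator of $I$ is divisible by both $x_i$ and $x_j$: if $x_i\mid u\in\mathcal G(I)$, then $u/x_i\in\mathcal G(I_{1,i})\subseteq\mathcal G(I_{1,j})$, and no element of $\mathcal G(I_{1,j})$ is divisible by $x_j$, forcing $x_j\nmid u/x_i$ and hence, as $i\neq j$, $x_j\nmid u$. In matroid language this says no basis contains $\{i,j\}$, so $\{i,j\}$ is dependent; since $I$ is fully supported $M$ has no loops, so $\{i\}$ and $\{j\}$ are independent and $\{i,j\}$ is a circuit, i.e.\ $i$ and $j$ are parallel in $M$. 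The decisive step is then the classical fact that parallel elements may be exchanged inside any basis: if $\{i,j\}$ is a circuit and $B$ is a basis with $i\in B$ (so $j\notin B$), then $(B\setminus\{i\})\cup\{j\}$ is again a basis. I would prove this by circuit elimination, since a circuit contained in $(B\setminus\{i\})\cup\{j\}$ must contain $j$, and eliminating $j$ between it and $\{i,j\}$ yields a circuit inside $B$, contradicting independence of $B$. Applying this in both directions gives mutually inverse bijections between $A_i$ and $A_j$ via $u\mapsto(u/x_i)x_j$ and $w\mapsto(w/x_j)x_i$; because $u/x_i=((u/x_i)x_j)/x_j$, this identifies $\mathcal G(I_{1,i})$ with $\mathcal G(I_{1,j})$, whence $I_{1,i}=I_{1,j}$.

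I expect the main obstacle to be precisely this last step. The single‑degree exchange property only guarantees the existence of \emph{some} exchange index, whereas in (ii) one needs the exchange to land on the prescribed variable $x_i$; bare application of the exchange property to two generators returns an uncontrolled index and is therefore insufficient. The argument must instead exploit that $\{i,j\}$ is a circuit (equivalently, one may invoke the symmetric exchange property of polymatroids) rather than a single instance of basis exchange. Establishing the reduction ``no generator contains both $x_i$ and $x_j$'' is routine, but it is the linchpin that converts the hypothesis $I_{1,i}\subseteq I_{1,j}$ into the parallelism that drives the proof.
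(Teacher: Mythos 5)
Your proof is correct, and while part (i) coincides with the paper's argument (both produce a generator outside $(x_i)$ from $\gcd(I)=1$ and apply the single-degree exchange property at the coordinate $i$), your part (ii) takes a genuinely different route. The paper argues by contradiction: given $u\in\mathcal G(I_{1,j})\setminus\mathcal G(I_{1,i})$, it splits into the cases $x_i\mid u$ and $x_i\nmid u$, and disposes of the second case by invoking Theorem~\ref{Th.1}, i.e.\ the containment $I_{0,i}\subseteq I_{1,i}$, which in turn rests on the monomial localization theorem of Bandari--Herzog \cite{BH}. You instead stay entirely inside matroid combinatorics: you first show (much like the paper's first case) that the hypothesis $I_{1,i}\subseteq I_{1,j}$ forces no generator to be divisible by both $x_i$ and $x_j$, then upgrade this to the statement that $\{i,j\}$ is a circuit (full support ruling out loops), and finish with the classical parallel-element exchange, proved via circuit elimination. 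Your diagnosis of the difficulty is also accurate: a bare application of the exchange property returns an uncontrolled index, and the two proofs differ exactly in how they force the exchange to land on the prescribed variable --- the paper through the colon-ideal containment $I_{0,i}\subseteq I_{1,i}$, you through parallelism. What the paper's route buys is economy within its own toolkit (Theorem~\ref{Th.1} is already established and no circuit axioms are ever introduced); what your route buys is independence from \cite{BH} and a sharper structural conclusion, namely that $x_i$ and $x_j$ are interchangeable in \emph{every} generator, which is precisely the phenomenon exploited later in Theorem~\ref{Th.2} when vertices with equal neighbourhoods are grouped into the parts $V_1,\ldots,V_r$.
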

  
\begin{proof}
(i) The assumption that  $I$ is fully supported gives that $I_{1,i} \neq 0$, for each $i=1, \ldots, n$. Moreover, $\gcd(I)=1$ gives that $I_{0,i} \neq 0$, for each $i=1, \ldots, n$. Let $u'\in \mathcal{G}(I_{1,i})$ and $v\in \mathcal{G}(I_{0,i})$. Then it follows from the definition of $I_{0,i}$ and $I_{1,i}$ that $v\in \mathcal{G}(I)$ and $u:=u' x_i \in \mathcal{G}(I)$. Since $\deg_{x_i}u > \deg_{x_i}v$, one can conclude from the exchange property that there exists some $s$ with $\deg_{x_s}u < \deg_{x_s}v$ such that $u'x_s =x_s(u/x_i) \in \mathcal{G}(I)$. Then, $u'\in \mathcal{G}(I_{1,s})$. We therefore have $\mathcal{G} (I_{1, i} )\subseteq \cup_{t=1, ~t \neq i}^n \mathcal{G} (I_{1, t})$, as claimed. 

(ii)  Suppose, on the contrary, that there exists some monomial 
 $u \in \mathcal{G}(I_{1,j}) \setminus \mathcal{G}(I_{1,i})$. Thus, $x_ju  \in \mathcal{G}(I)$. If $x_i\in \supp(u)$, then $(x_ju)/x_i \in  \mathcal{G}(I_{1,i})$. Using the assumption $I_{1,i}\subseteq  I_{1,j}$, we obtain $(x_ju)/x_i \in I_{1,j}$ which gives $(x_j^2u)/x_i \in I$. Since $I$ is matroidal, $(x_ju)/x_i \in I$. This yields a contradiction to $x_ju \in \mathcal{G}(I)$. Hence, $x_i\notin \supp(u)$. Since $x_ju \in \mathcal{G}(I)$, we conclude that $x_ju \in \mathcal{G}(I_{0,i})$. From Theorem~\ref{Th.1},  we know that $I_{0,i} \subset I_{1,i}$. Then there exists some monomial $w \in \mathcal{G}(I_{1,i})$ such that $w$ divides $x_ju$. Again, by using the assumption, $I_{1,i}\subseteq  I_{1,j}$, we have $x_j \notin \supp(w)$ and therefore, $w| u$. Since both $w$ and $u$ are monomials of degree $d-1$, we conclude that $u=w$, as required.  \end{proof}

It follows from \cite[Theorem 3.2]{CW}  that if $I(G)$ is a matroidal ideal, then $G$ is a complete multipartite graph. The converse follows as a corollary of \cite[Theorem 2.2]{AY}. We give another straightforward proof of this fact in the language of edge ideals of complete multipartite graphs.

 \begin{Theorem} \label{Th.2}
Let $G$ be a graph without isolated vertices. The edge ideal $I(G)$ is a matroidal ideal if and only if $G$ is a complete $r$-partite graph for some $r\geq2$.
 \end{Theorem}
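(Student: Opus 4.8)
The plan is to prove both implications directly from the exchange-property characterization of matroidal ideals recalled in Section~\ref{prem}. Throughout I will use that we may assume $G$ has no isolated vertices, so that $I(G)$ is fully supported; this is genuinely needed, since otherwise an edge ideal can be matroidal without $G$ being complete multipartite (for instance $I(G)=(x_1x_2)$ when $G$ is a single edge together with an isolated vertex, a principal and hence matroidal ideal).

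For the ``if'' direction, suppose $G=K_{n_1,\ldots,n_r}$ with parts $V_1,\ldots,V_r$. Since $I(G)$ is squarefree and generated in degree $2$, I only need to verify the exchange property. Take two generators $x_px_q$ and $x_sx_t$ together with an index $i$ satisfying $\deg_{x_i}(x_px_q)>\deg_{x_i}(x_sx_t)$; say $i=p$, so $p\notin\{s,t\}$. I must produce $j\in\{s,t\}$ with $x_jx_q\in\mathcal{G}(I)$, that is, with $\{x_q,x_j\}\in E(G)$. If $q\in\{s,t\}$, say $q=s$, then $j=t$ works, since $x_sx_t$ is itself the edge $\{x_q,x_t\}$. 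Otherwise $q\notin\{s,t\}$; letting $V_\ell$ be the part containing $x_q$, at most one of $x_s,x_t$ lies in $V_\ell$ because they are endpoints of an edge and hence lie in distinct parts, so at least one of them, say $x_j$, lies in a part different from $V_\ell$ and is therefore adjacent to $x_q$. In either case $x_jx_q\in\mathcal{G}(I)$, which establishes the exchange property.

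For the ``only if'' direction I will use the characterization recorded in Section~\ref{prem}: $G$ is complete multipartite if and only if its complement is a disjoint union of cliques, equivalently no three vertices of $G$ span exactly one edge. Assume for contradiction that there are vertices $x_a,x_b,x_c$ with $\{x_a,x_b\}\in E(G)$ but $\{x_a,x_c\},\{x_b,x_c\}\notin E(G)$. Since $I(G)$ is fully supported, $x_c$ lies in some edge $\{x_c,x_d\}$, and necessarily $d\notin\{a,b\}$ because $\{x_a,x_c\}$ and $\{x_b,x_c\}$ are non-edges. Now apply the exchange property to the generators $x_cx_d$ and $x_ax_b$ at the index $i=d$, where $\deg_{x_d}(x_cx_d)=1>0=\deg_{x_d}(x_ax_b)$: there must exist $j\in\{a,b\}$ with $x_j\,(x_cx_d/x_d)=x_jx_c\in\mathcal{G}(I)$, that is, $\{x_j,x_c\}\in E(G)$ for some $j\in\{a,b\}$. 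This contradicts $\{x_a,x_c\},\{x_b,x_c\}\notin E(G)$, so no such triple exists and $G$ is complete $r$-partite; since $I(G)\neq 0$ forces at least one edge, $r\geq 2$.

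The main obstacle is the ``only if'' direction, and its crux is the correct deployment of the exchange property: one must test the generator $x_cx_d$ arising from a neighbour $x_d$ of the ``detached'' vertex $x_c$ against the edge $x_ax_b$ and remove precisely the variable $x_d$, which forces $x_c$ to become adjacent to $x_a$ or $x_b$. Choosing another pair of generators or a different index does not produce the contradiction, so isolating this particular exchange is the decisive step.
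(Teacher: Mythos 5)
Your proof is correct, and while your ``if'' direction coincides with the paper's (both verify the exchange property by observing that the two endpoints of the second edge lie in different parts, so at least one of them avoids the part containing the retained vertex), your ``only if'' direction takes a genuinely different route. The paper first disposes of the cases where $I(G)$ is principal or $\gcd(I(G))=x_i$, and in the remaining case invokes Proposition~\ref{Pro.1}(ii) (which rests on Theorem~\ref{Th.1}) to show that $N_G(x_i)\subseteq N_G(x_j)$ forces $N_G(x_i)=N_G(x_j)$; it then partitions $V(G)$ into classes of vertices with equal neighbourhoods, checks these are independent sets, and applies the exchange property to a pair of disjoint edges to prove adjacency across classes. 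You instead use the complement characterization -- $G$ is complete multipartite if and only if no three vertices span exactly one edge -- and eliminate a hypothetical bad triple $x_a,x_b,x_c$ with a single exchange applied to the generators $x_cx_d$ and $x_ax_b$ at the index $d$. Your argument is shorter and more self-contained: it needs no case analysis on $\gcd(I(G))$ and none of the machinery of Theorem~\ref{Th.1} and Proposition~\ref{Pro.1}, whereas the paper's route has the advantage of exhibiting the multipartite structure explicitly (the parts are precisely the neighbourhood classes). Two minor remarks: your observation that the no-isolated-vertices assumption is genuinely needed (the statement is literally false for a single edge plus an isolated vertex) is a point the paper passes over with ``we can assume''; and you quote without proof the standard fact that complete multipartite is equivalent to the complement being a disjoint union of cliques, equivalently to the absence of a triple spanning exactly one edge -- this is consistent with the unproved remark in the paper's preliminaries, but a one-line justification (in the complement, the relation ``adjacent or equal'' is transitive exactly when no such triple exists) would make your argument fully self-contained.
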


 \begin{proof}
We first assume that $G$ is a complete $r$-partite graph, that is, $G=K_{n_1, .., n_r}$ with vertex partition $V_1, \ldots, V_r$ and $|V_i|=n_i$ for all $i=1,\ldots, r$. Let $u:=x_k x_l \in I(G)$  and   $v:=x_ix_j \in I(G)$ such that $x_k$ does not divide $v$. We need to show that either $x_i(u/x_k)=x_ix_l \in I(G)$ or $x_j(u/x_k)=x_jx_l \in I(G)$. Let $x_l \in V_t$ for some $1 \leq t \leq r$.  The graph $G$ is complete $r$-partite and  $\{x_i,x_j\} \in E(G)$, hence it follows that at least one of the vertices between $x_i$ and $x_j$ does not belong to $V_t$. If $x_i \notin V_t$, then $x_ix_l \in I(G)$ and similarly if $x_j \notin V_t$, then $x_jx_l \in I(G)$, as required.

  Conversely, let  $I(G)$ be a matroidal ideal and $|V(G)|=n$. We can assume that $G$ does not contain any isolated vertices, and hence $I(G)$ is fully supported. If $I(G)$ is a principal ideal, then there is nothing to prove and the assertion holds trivially. Let $I(G)$ be not a principal ideal. If $\gcd(I(G)) = x_i$, for some variable $x_i$, then $G$ is a complete bipartite graph with $r$-partition $V_1=\{x_i\}$ and $V_2=V(G)\setminus\{x_i\}$, and again the assertion holds in this case.
  
  Finally, we assume that $\gcd(I)=1$. Note that for each $i=1, \ldots, n$, we have $I(G)_{1, i} =(x_q: x_q\in N_{G}(x_i))$, where $N_{G}(x_i)$ denotes the neighbourhood of $x_i$ in $G$. Then from Proposition~\ref{Pro.1}(ii), we conclude that if    $I(G)_{1, i} \subseteq    I(G)_{1, j}$, for some $i \neq j$ then  $I(G)_{1, i}=I(G)_{1, j}$; equivalently, if $ N_{G}(x_i) \subseteq  N_{G}(x_j)$, then $ N_{G}(x_i)= N_{G}(x_j)$. We can partition vertices of $G$ into disjoint sets, say $V_1, \ldots, V_r$ such that for all $k=1, \ldots, r$, the vertices in $V_k$ have same neighbourhood. Since $G$ is a simple graph, it is clear that each of the $V_k$ is an independent set of $G$. 
  We claim that $G=K_{n_1, \ldots, n_r}$ with $r$-partition $V_1, \ldots, V_r$. To prove our claim, it only remains to verify the following: if $x_i \in V_k$ and $x_j \in V_{\ell}$ with $k \neq \ell$, then   $\{x_i,x_j\} \in E(G)$. By virtue of  Proposition~\ref{Pro.1}(ii), we have $ N_{G}(x_i) \not\subseteq N_{G}(x_j)$ and $ N_{G}(x_j) \not\subseteq  N_{G}(x_i)$. Let $x_k \in N_{G}(x_i)\setminus N_{G}(x_j)$ and $x_\ell \in N_{G}(x_j)\setminus N_{G}(x_i)$. Then $\{x_i,x_k\}$ and $\{x_j,x_\ell\}$ are disjoint edges in $G$. Since $I(G)$ is matroidal, then by applying exchange property on $x_ix_k$ and $x_jx_\ell$, we conclude that $\{x_i,x_j\} \in E(G)$, as claimed. This completes the proof.  
\end{proof}


 To prove the main result of this section, we first need the following:
 \begin{Lemma}\label{Lem.2}
Let $G=K_{n_1,\ldots,n_r}$ and $t \geq 3$. Furthermore, let $P_1: x_{i_1},\ldots, x_{i_{t-1}}$ be a $(t-1)$-path in $G$ and $P_2:x_{j_1}, \ldots, x_{j_t}$ be a $t$-path in $G$. Then there exists  $x_{j_k} \in V(P_2)\setminus V(P_1)$ such that one of the following statements  is satisfied.
\begin{enumerate}
\item[\em{(1)}] $x_{j_k}, x_{i_1},\ldots, x_{i_{t-1}}$ is a $t$-path in $G$.
\item[\em{(2)}]  $x_{i_1},\ldots, x_{i_{t-1}},x_{j_k}$ is a $t$-path in $G$.
\item[\em{(3)}]  $t \geq 4$ and there exists $2\leq p \leq t-2$ such that $x_{i_1},\ldots x_{i_{p}}, x_{j_k}, x_{i_{p+1}}, \ldots, x_{i_{t-1}}$ is a $t$-path in $G$.
\end{enumerate}
\end{Lemma}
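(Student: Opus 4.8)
The plan is to translate everything into the language of \emph{parts}: in $G=K_{n_1,\ldots,n_r}$ two distinct vertices are adjacent precisely when they lie in different maximal independent sets, so a sequence of distinct vertices is a path exactly when consecutive entries lie in different $V_s$. Since $|V(P_2)|=t>t-1=|V(P_1)|$, the set $W=V(P_2)\setminus V(P_1)$ is non-empty, and the lemma asks us to insert some $w\in W$ into $P_1$ at the front (conclusion (1)), at the back (conclusion (2)), or into one of the interior gaps allowed by (3). I would call $w$ \emph{insertable} if some such placement yields a $t$-path, so that the whole task is to produce one insertable $w\in W$. Note that $w\notin V(P_1)$ automatically keeps all vertices distinct, so only the two new adjacencies created by $w$ need to be checked.

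Next I would record exactly when a given $w\in V_s$ fails to be insertable. Placement at the front works iff $x_{i_1}\notin V_s$, at the back iff $x_{i_{t-1}}\notin V_s$, and in the gap between $x_{i_p}$ and $x_{i_{p+1}}$ iff neither endpoint lies in $V_s$. The only gap not offered by conclusion (3) is the one between $x_{i_1}$ and $x_{i_2}$, but if that gap were usable then $x_{i_1}\notin V_s$ and the front placement would already work; hence $w$ is insertable iff it fits at the front, at the back, or in \emph{any} gap. Therefore $w$ is \emph{not} insertable iff $x_{i_1},x_{i_{t-1}}\in V_s$ and every consecutive pair of $P_1$ meets $V_s$. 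Since $P_1$ is a path, no two consecutive vertices share a part, so this forces the membership in $V_s$ to alternate, occupying exactly the odd positions; for $x_{i_{t-1}}$ to be odd-indexed, $t$ must be even. In short, failure of $w\in V_s$ forces $t$ even and $V_s\cap V(P_1)=\{x_{i_1},x_{i_3},\ldots,x_{i_{t-1}}\}$, a set of $t/2$ vertices.

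Now I would argue by contradiction, assuming that no $w\in W$ is insertable. If $t$ is odd no vertex can fail, so $t$ is even (and hence $\geq 4$). For every part $V_s$ meeting $W$ the previous step gives $V_s\cap V(P_1)=\{x_{i_1},x_{i_3},\ldots,x_{i_{t-1}}\}$; as these are fixed vertices lying in a single part, all of $W$ must sit inside one part $V_{s_0}$, with the odd positions of $P_1$ inside $V_{s_0}$ and the even positions outside it. The finishing blow is a count against Remark~\ref{Rem.1}(i): since $W\subseteq V_{s_0}$, the vertices of $P_2$ lying outside $V_{s_0}$ all come from the even positions of $P_1$, so there are at most $t/2-1$ of them, whence $|V(P_2)\cap V_{s_0}|\geq t-(t/2-1)=t/2+1$. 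But $P_2$ is a $t$-path, so Remark~\ref{Rem.1}(i) bounds $|V(P_2)\cap V_{s_0}|$ by $\lceil t/2\rceil=t/2$, a contradiction.

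The step I expect to be the real obstacle is the even-$t$ analysis: there a single new vertex can genuinely be blocked at every position, and the resolution is the two-part observation that all blocking parts must coincide (collapsing $W$ into one $V_{s_0}$) and that this then saturates $V_{s_0}$ inside $P_2$ beyond the bound of Remark~\ref{Rem.1}(i). The odd-$t$ case, by contrast, is immediate from the insertability criterion.
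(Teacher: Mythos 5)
Your proposal is correct and takes essentially the same route as the paper's proof: both reduce insertability of a vertex $w$ to part-membership conditions at the front, back, and interior gaps of $P_1$, show that total failure forces the part of $x_{i_1}$ to occupy exactly the odd positions of $P_1$ (hence $t$ even), and then reach the same counting contradiction, since $V(P_2)\setminus V(P_1)$ lying in that single part would force $P_2$ to have more than $\lceil t/2\rceil$ vertices there, violating Remark~\ref{Rem.1}(i). Your per-part characterization of failure, with the observation that all blocking parts must collapse onto the part containing $x_{i_1}$, is just a tidier repackaging of the paper's sequential case analysis anchored at that part.
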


\begin{proof}
 Let $V_1 , \ldots, V_r$ be the $r$-partition of $G=K_{n_1,\ldots,n_r}$. We may assume that $x_{i_1} \in V_1$. If there exists $x_{j_k} \in V(P_2)\setminus V(P_1)$ such that $x_{j_k} \notin V_1$, then $\{x_{j_k}, x_{i_1}\} \in E(G)$, and $P:x_{j_k}, x_{i_1},\ldots, x_{i_{t-1}}$ is a $t$-path in $G$, as given in (1).

If (1) is not true, then every $x_{j_k} \in V(P_2)\setminus V(P_1)$ is such that $x_{j_k}  \in V_1$.   If  $x_{i_{t-1}} \notin V_1$, then for any  $x_{j_k} \in V(P_2)\setminus V(P_1)$, we have $\{x_{i_{t-1}},x_{j_k}\} \in E(G)$, and  hence we obtain a $t$-path $P:x_{i_1},\ldots, x_{i_{t-1}},x_{j_k} $ in $G$, as given in (2). Note that if $t=3$, then in $P_1:x_{i_1},x_{i_2}$, both vertices cannot be in $G_1$ because $G$ is a complete $r$-partite graphs. Hence either (1) or (2) must be true for $t=3$.

If both (1) and (2) do not hold, then $t \geq 4$. In this case, we try to construct a path as given in (3). The negation of (1) and (2) gives us that $x_{i_1}, x_{i_{t-1}} \in V_1$ and every $x_{j_k} \in V(P_2)\setminus V(P_1)$ is such that $x_{j_k}  \in V_1$. If there exists some $2\leq p\leq t-2$ such that $x_{i_p} ,x_{i_{p+1}} \notin V_1$, then for any $x_{j_k} \in V(P_2)\setminus V(P_1)$, we have $\{x_{i_p}, x_{j_k}\},  \{x_{j_k} , x_{i_{p+1}}\} \in E(G)$, and $P:x_{i_2},\ldots, x_{i_p}, x_{j_k},  x_{i_{p+1}}, \ldots, x_{i_t}$ is a $t$-path in $G$ as given in (3). 
Finally, suppose that each of (1), (2), and (3) do not hold. Then we are in the following situation: 
\begin{enumerate}
\item[(i)] $x_{i_1}, x_{i_{t-1}} \in V_1$;
\item[(ii)] every $x_{j_k} \in V(P_2)\setminus V(P_1)$ is such that $x_{j_k}  \in V_1$. This also gives that for every $x_{j_k} \in V(P_2)\setminus V_1$, we have $x_{j_k}\in V(P_1)$;
\item[(iii)]  for every $2 \leq p\leq t-2$ either $x_{i_p} \in V_1$ or $x_{i_{p+1}} \in V_1$.
\end{enumerate}

Since $x_{i_1} \in V_1$ and $\{x_{i_1}, x_{i_2}\} \in E(G)$, this implies  that $x_{i_2} \notin V_1$. It follows now from (iii) that $x_{i_3} \in V_1$. By continuing this process, we obtain from (iii) that $x_{i_{t-2}}\notin V_1$ since $x_{i_{t-1}} \in V_1$ and $\{x_{i_{t-2}}, x_{i_{t-1}}\}\in E(G)$. This shows that $P_1$ starts and ends at vertices in $V_1$, and all vertices in $P_1$ with odd indices are also in $V_1$. Furthermore, all vertices in $V(P_1)$ with even indices  do not belong to $V_1$. Hence, $t-1$ must be an odd integer, and in $V(P_1)$, there are $t/2-1$ vertices that do not belong to $V_1$, that is,  $|V(P_1)\setminus V_1| = t/2-1$. Moreover, it follows from (ii) that  every vertex in $V(P_2)$ which is outside of $V_1$ must belong to $V(P_1)$, that is,  $V(P_2)\setminus V_1 \subseteq V(P_1)\setminus V_1$. Since  $P_2$ is a $t$-path with even number of vertices in a complete $r$-partite graph $G$, this gives that  at least half of its vertices are not in $V_1$, that is,  $|V(P_2)\setminus V_1| \geq t/2$. This contradicts $|V(P_2)\setminus V_1|  \leq |V(P_1)\setminus V_1| = t/2-1$. This yields that  at least one of the statements in (1), (2),  or (3) must hold. 
 \end{proof}


Now we state the main result of this section.

\begin{Theorem} \label{Th.3}
Let $t \geq 2$. If $I_{t}(K_{n_1,\ldots,n_r}) \neq 0$, then it  is a matroidal ideal. 
\end{Theorem}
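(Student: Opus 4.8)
The plan is to prove that $I_t(K_{n_1,\ldots,n_r})$ is matroidal by verifying the polymatroidal exchange property on its generators, which in the squarefree case is exactly the basis exchange property (EP) for the collection $\mathcal{B}$ of $t$-path vertex sets. Since every generator is squarefree of the same degree $t$, no generator divides another, and $\mathcal{G}(I_t(G))$ is exactly the set of monomials $\prod_{x_i\in A}x_i$ with $A$ ranging over the vertex sets of $t$-paths in $G=K_{n_1,\ldots,n_r}$. Thus it suffices to prove the following combinatorial statement: if $A,B\in\mathcal{B}$ and $a\in A\setminus B$, then there is $b\in B\setminus A$ such that $(A\setminus\{a\})\cup\{b\}\in\mathcal{B}$. (If $I_t(G)$ is principal the condition is vacuous; otherwise, since $|A|=|B|=t$ and $a\in A\setminus B$, the set $B\setminus A$ is nonempty.)

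First I would record a clean description of $\mathcal{B}$: a $t$-subset $A\subseteq V(G)$ lies in $\mathcal{B}$ if and only if $|A\cap V_s|\le\lceil t/2\rceil$ for every maximal independent set $V_s$. Necessity is precisely Remark~\ref{Rem.1}(i). Sufficiency is the standard fact that $t$ objects partitioned into classes each of size at most $\lceil t/2\rceil$ can be arranged in a line with no two consecutive objects in the same class; concretely one fills the $\lceil t/2\rceil$ odd positions starting with the largest class and then fills the remaining positions, and the bound is exactly what prevents a class from being forced into two adjacent slots. Through this dictionary the exchange statement becomes a statement about the integers $|A\cap V_s|$.

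Write $m=\lceil t/2\rceil$ and say $a\in V_{s_0}$. Deleting $a$ lowers $|A\cap V_{s_0}|$ by one, while adjoining $b\in B\setminus A$ with $b\in V_{s_1}$ raises $|A\cap V_{s_1}|$ by one, all other counts being unchanged. By the characterization, $(A\setminus\{a\})\cup\{b\}\in\mathcal{B}$ as soon as the one possibly increased count remains $\le m$; that is, it suffices to find $b\in B\setminus A$ lying either in $V_{s_0}$ or in a part $V_{s_1}$ with $|A\cap V_{s_1}|<m$. Equivalently, I must produce some $b\in B\setminus A$ outside every \emph{full} part, where $V_s$ is called full if $s\ne s_0$ and $|A\cap V_s|=m$.

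The crux, and the step I expect to be the main obstacle, is proving such a $b$ exists; I would settle it by a double count. Let $T=\{s\ne s_0:|A\cap V_s|=m\}$ and suppose, for contradiction, that $B\setminus A\subseteq\bigcup_{s\in T}V_s$. For each $s\in T$, using $|B\cap V_s|\le m$ and $|A\cap V_s|=m$, one gets $|(B\setminus A)\cap V_s|=|B\cap V_s|-|A\cap B\cap V_s|\le m-(m-|(A\setminus B)\cap V_s|)=|(A\setminus B)\cap V_s|$. Summing over $s\in T$, the left side equals $|B\setminus A|$ by the assumption, while the right side is at most $|A\setminus B|$; since $|A\setminus B|=|B\setminus A|$, all inequalities are equalities, forcing $A\setminus B\subseteq\bigcup_{s\in T}V_s$. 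This contradicts $a\in A\setminus B$ with $a\in V_{s_0}$ and $s_0\notin T$, so a suitable $b$ exists and the exchange property holds. As an alternative route in most cases, whenever $A\setminus\{a\}$ is itself a $(t-1)$-path vertex set—which holds when $t$ is even, or when $|A\cap V_{s_0}|=\lceil t/2\rceil$—one may instead apply Lemma~\ref{Lem.2} to a $(t-1)$-path on $A\setminus\{a\}$ and a $t$-path on $B$ to insert a vertex $b\in B\setminus(A\setminus\{a\})=B\setminus A$ and obtain the desired $t$-path constructively; the count argument is exactly what covers the remaining case, in which $t$ is odd and a maximal part different from $V_{s_0}$ forces $a$ into an interior position.
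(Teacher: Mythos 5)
Your proof is correct, and it takes a genuinely different route from the paper's. The paper argues by direct path surgery: its key tool is Lemma~\ref{Lem.2}, which inserts a vertex of a $t$-path into a $(t-1)$-path, and the proof of Theorem~\ref{Th.3} then runs a lengthy case analysis on the position of the deleted vertex and on which independent sets contain its neighbours, ending in a parity-counting contradiction. You instead characterize the vertex sets of $t$-paths numerically: a $t$-subset $A$ is the vertex set of a $t$-path if and only if $|A\cap V_s|\le\lceil t/2\rceil$ for every part $V_s$; necessity is Remark~\ref{Rem.1}(i), and sufficiency is the arrangement argument of Remark~\ref{color}, which the paper states and proves independently of Theorem~\ref{Th.3} (it only exploits it later, for Theorem~\ref{CM}), so there is no circularity. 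This dictionary identifies $\mathcal{G}(I_t(G))$ with the bases of the rank-$t$ truncation of a partition matroid with capacities $\lceil t/2\rceil$, and your double-counting verification of the exchange axiom is sound: assuming $B\setminus A$ lies entirely in the full parts forces, via $|(B\setminus A)\cap V_s|\le|(A\setminus B)\cap V_s|$ and $|A\setminus B|=|B\setminus A|$, that $A\setminus B$ also lies in the full parts, contradicting $a\in V_{s_0}$ with $s_0$ not full. (Your closing side remark is also accurate: when $t$ is odd at most one part can have $\lceil t/2\rceil$ elements of $A$, so the constructive route via Lemma~\ref{Lem.2} covers exactly the cases you say it does.) What your approach buys is brevity and structure: a few lines of arithmetic replace the paper's case analysis, and Theorem~\ref{CM} becomes nearly immediate, since being squarefree Veronese is precisely the statement that every $t$-subset satisfies the count condition. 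What the paper's approach buys is an explicit, constructive recipe for transforming one path into another, at the cost of considerably more case work.
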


\begin{proof}
We set $G:=K_{n_1,\ldots,n_r}$ and let $V_1, \ldots, V_r$ be the $r$-partition of $G$. The assertion is true for $I_2(G)$, as shown in Theorem \ref{Th.2}.  Let $t \geq 3$ and $I_t(G) \neq 0$. If $I_t(G)$ is a principal ideal, then there is nothing to prove and the assertion holds trivially. Let $|\mathcal{G} (I_t(G))| \geq 2$. To prove that $I_{t}(G)$ is matroidal, we need to show that if $P_1: x_{i_1},\ldots, x_{i_t}$ and $P_2:x_{j_1}, \ldots, x_{j_t}$ are two $t$-paths with $x_{i_l} \notin V(P_2)$ for some $1\leq l \leq t$, then there exists some $x_{j_k} \in V(P_2)\setminus V(P_1)$, such that one obtains a new path by removing $x_{i_l}$ from $P_1$ and inserting $x_{j_k}$ in an appropriate position in $P_1$.  

We may assume that $x_{i_l} \in V_1$. If $x_{i_1},\ldots,x_{i_{l-1}}, x_{i_{l+1}}\ldots, x_{i_t}$ is $(t-1)$-path  in $G$ then we obtain the desired  conclusion by using Lemma~\ref{Lem.2}. In particular, if $l=1$ or $l=t$, then removing $x_{i_l}$ from $P_1$ gives a $(t-1)$-path in $G$. Now assume that $x_{i_1},\ldots,x_{i_{l-1}}, x_{i_{l+1}}\ldots, x_{i_t}$ is not a $(t-1)$-path  in $G$. This is possible only if $\{x_{i_{l-1}}, x_{i_{l+1}}\} \notin E(G)$, that is, $x_{i_{l-1}}, x_{i_{l+1}}$ belong to $V_a$ for some $2 \leq a \leq r$. If there exists some $x_{j_k} \in V(P_2)\setminus V(P_1)$ such that $x_{j_k} \notin V_a$, then  \[x_{i_1},\ldots, x_{i_{l-1}}, x_{j_k}, x_{i_{l+1}}, \ldots, x_{i_t}\] is a $t$-path in $G$, and the proof is complete. Otherwise, every $x_{j_k} \in V(P_2)\setminus V(P_1)$ is such that $x_{j_k} \in V_a$.  If $x_{i_1} \notin V_a$, then $\{x_{i_1},x_{i_{l+1}}\} \in E(G)$, and we get $x_{i_{l-1}}, x_{i_{l-2}}, \ldots, x_{i_1},x_{i_{l+1}}, \ldots, x_{i_{t}}$ as a $(t-1)$-path in $G$. We again use Lemma~\ref{Lem.2} to obtain the desired conclusion. A similar argument gives us the desired conclusion when $x_{i_t} \notin V_a$. Otherwise, we are in the following situation:

  


\begin{enumerate}
\item[{(1)}]  $x_{i_{l-1}} , x_{i_{l+1}}, x_{i_1}, x_{i_t} \in V_a$;
\item[{(2)}]   every $x_{j_k} \in V(P_2)\setminus V(P_1)$ is such that $x_{j_k} \in V_a$.  This  implies  that if $x_{j_k} \in V(P_2) \setminus V_a$, then $x_{j_k} \in V(P_1)$.
\end{enumerate} 
We  claim that there must exist  some $p$ with either $2\leq p \leq l-3$ or $ l+2 \leq p \leq t-2$ such that $x_{i_p}, x_{i_{p+1}} \notin V_a$. Indeed,  if our claim is true and   we can find such $p$ with  $2\leq p \leq l-3$, then 
$$x_{i_1},\ldots, x_{i_p}, x_{i_{l-1}}, x_{i_{l-2}},\ldots, x_{i_{p+1}},x_{i_{l+1}}, \ldots, x_{i_{t}}
$$
is a $(t-1)$-path in $G$, and the desired result can be deduced from  Lemma~\ref{Lem.2}. Similarly, if we can find such $p$ with $ l+2 \leq p \leq t-2$, then 

\[
x_{i_1},\ldots x_{i_{l-1}},x_{i_p}, x_{i_{p-1}}, \ldots,    x_{i_{l+1}},x_{i_{p+1}},\ldots, x_{i_{t}}
\]
is a $(t-1)$-path in $G$, and  one can conclude the desired result  from Lemma~\ref{Lem.2}. 

To establish  our claim, suppose, on the contrary, that 
for every  $p$ with $2\leq p \leq l-3$ and $ l+2 \leq p \leq t-2$, we have  either $x_{i_p}\in V_a$ or $x_{i_{p+1}} \in V_a$.
Since $x_{i_1} \in V_a$ and $\{x_{i_1},x_{i_2}\} \in E(G)$, this yields that $ x_{i_2} \notin V_a$. 
It follows now from  the assumption that  $x_{i_3} \in V_a$. 
Moreover, from (1) it follows  that 
$x_{i_{l-2}}\notin V_a$ since $x_{i_{l-1}} \in V_a$ and $\{x_{i_{l-2}},x_{i_{l-1}}\} \in E(G)$.  
   This forces $l-1$ to be an odd integer. Similarly, since $x_{i_{l+1}},  x_{i_t} \in V_a$, we derive that the number of vertices in the path $x_{i_{l+1}}, \ldots, x_{i_{t}}$ is odd as well. Collectively, we obtain that $t$ itself must be odd.  This shows that $V(P_1)$ contains $(t+1)/2$ vertices from $V_a$ and the other $(t-1)/2$  vertices are not from $V_a$. Note that these $(t-1)/2$  vertices that are not from $V_a$ include $x_{i_l}$ as well. Considering that, by (2),  every vertex in $V(P_2)\setminus V(P_1)$ belongs to $V_a$  and $x_{i_l} \notin V(P_2)$,  this implies  that in $V(P_2)$  there are at most $(t-1)/2-1$ vertices that are not from $V_a$, which is impossible by Remark~\ref{Rem.1}(i). Hence the claim holds. 
\end{proof}


It is known from \cite[Proposition 3.11]{HRV} that polymatroidal ideals are normal. In \cite[Corollary 2.11]{RV}, it is shown that the $t$-path ideals of complete bipartite graphs are normal. This result can also be seen now as a corollary of the above theorem. 

In general, for a given $t \geq 3$, one can find graphs that are not complete $r$-partite but their $t$-path ideal $I_t(G)$ is matroidal. As a very simple example, let $G$ itself be a path on $t$ vertices. Then  $I_t(G)$ is matroidal because it is a principal ideal, while $G$ is not complete $r$-partite.

Theorems~\ref{Th.2} and \ref{Th.3} together show that if $I_2(G)$ is matroidal then $I_t(G)$ is also matroidal for all $t \geq 2$. Therefore, it is natural to ask the following question: If $I_t(G)$ is matroidal for some $t$, then is it true that $I_{k}(G)$ is also matroidal for all $k \geq t$? The following example shows that it is not true in general. 

\begin{Example}\label{exp2}
Let $G$ be the graph as shown in the following figure. 
\begin{figure}[htbp]
\includegraphics[width = 3cm]{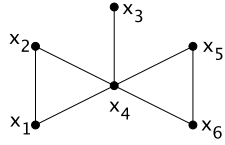}
\caption{The graph $G$}
\end{figure}
Then 
\[I_3(G)=(x_1x_2x_4,x_1x_3x_4,x_1x_4x_5,x_1x_4x_6,x_2x_3x_4,x_2x_4x_5
,x_2x_4x_6,\]
\[x_3x_4x_5,x_3x_4x_6,x_4x_5x_6).
\]
Note that $I_3(G)=x_4 J$, where
 \[
J=(x_1x_2,x_1x_3,x_1x_5,x_1x_6,x_2x_3,x_2x_5
,x_2x_6,x_3x_5,x_3x_6,x_5x_6).
\]
The ideal $J$ is indeed the edge ideal of the complete graph on the vertex set $\{x_1,x_2,x_3,x_5,x_6\}$, and by Theorem~\ref{Th.2}, $J$ is matroidal. This shows that $I_3(G)$ is also matroidal because it is the product of two matroidal ideals, namely $(x_4)$ and $J$.   Set $u:=x_1x_2x_3x_4$ and $v:=x_3x_4x_5x_6$. Then $u,v \in I_4(G)$, but  both $x_5(u/x_1)$ and $x_6(u/x_1)$ do not belong to $G$. Hence, $I_4(G)$ is not matroidal. 
\end{Example}
 
In the view of Theorem~\ref{Th.3} and Example~\ref{exp2}, we close this section with the following questions.   

\begin{enumerate}
\item For a given $t>2$, characterize the graphs for which $I_t(G)$ is matroidal.
\item For which graphs it is true that if $I_t(G)$ is matroidal for some $t$, then $I_{k}(G)$ is also matroidal for all $k \geq t$?
\end{enumerate}

\section{Cohen--Macaulay property of $I_t(K_{n_1, \ldots, n_r})$}\label{sec:CM}
Now we investigate the Cohen--Macaulay property for $I_t(K_{n_1, \ldots, n_r})$. Let $I$ be a monomial ideal in the polynomial ring $S=K[x_1, \ldots, x_n]$. The ideal $I$ is called the {\em Veronese} ideal of degree $d$ in $S$ if $I$ is generated by all monomial of degree $d$ in $S$. Similarly, one defines the squarefree counterpart of a Veronese ideal. The monomial ideal $I$ is called {\em squarefree Veronese} ideal of degree $d$ in $S$ if $I$ is generated by all squarefree monomials of degree $d$ in $S$.  It follows from \cite[Theorem 4.2]{HH} that a polymatroidal ideal $I$ is Cohen-Macaulay if and only if
$I$ is
\begin{itemize}
\item[(i)]  a principal ideal, or
\item[(ii)] a Veronese ideal, or
\item[(iii)] a squarefree Veronese ideal.
\end{itemize}

In particular, if $I$ is a matroidal ideal, then it is Cohen-Macaulay if and only if it is a principal ideal or squarefree Veronese ideal. From Remark~\ref{Rem.1} (v) it follows that $I_t(K_{n_1, \ldots, n_r})$ is a principal ideal if and only if $t=n_1+\cdots+n_r$, and hence in this case it can be viewed as the squarefree Veronese ideal of degree $t=n$ in $S$. Therefore, for a given $t$, to be able to characterize $n_1, \ldots, n_r$ such that $I_t(K_{n_1, \ldots, n_r})$ is Cohen-Macaulay,  one only needs to check that when $I_t(K_{n_1, \ldots, n_r})$ is squarefree Veronese of degree $t$ in $S$. More precisely, one needs to check that for which $n_1,\ldots, n_r$ with $n_1+\cdots+n_r=n$, every subset of $[n]$ of size $t$ can be viewed as a path in  $K_{n_1, \ldots, n_r}$. In the case of edge ideal of $K_{n_1, \ldots, n_r}$, the answer immediately follows from \cite[Theorem 4.2]{HH}.
\begin{Proposition}
Let $G$ be a graph such that $I(G)$ is matroidal. Then  $I(G)$  is Cohen-Macaulay if and only if $G$ is a complete graph.
 \end{Proposition}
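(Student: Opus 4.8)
The plan is to combine the structural result of Theorem~\ref{Th.2} with the classification of Cohen--Macaulay polymatroidal ideals recorded just above from \cite[Theorem 4.2]{HH}. First I would use the hypothesis that $I(G)$ is matroidal together with Theorem~\ref{Th.2} to conclude that $G=K_{n_1,\ldots,n_r}$ for some $r\geq 2$. In particular every vertex lies in some part and is adjacent to all vertices of the other parts, so $G$ has no isolated vertices and $I(G)$ is fully supported in $S=K[x_1,\ldots,x_n]$ with $n=n_1+\cdots+n_r$. This reduces the problem to deciding, for which $n_1,\ldots,n_r$, the ideal $I(G)$ is Cohen--Macaulay.

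Next I would invoke the cited classification. Since $I(G)$ is an edge ideal it is generated in degree $2$ and is squarefree, while a Veronese ideal of degree $d\geq 2$ always contains the non-squarefree generator $x_1^d$; hence the Veronese alternative in \cite[Theorem 4.2]{HH} cannot occur here. Consequently $I(G)$ is Cohen--Macaulay if and only if it is a principal ideal or a squarefree Veronese ideal, and the whole argument becomes a matter of translating these two possibilities back into graph-theoretic language.

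The principal case means $I(G)$ has a single generator $x_ix_j$, i.e. $G$ has exactly one edge; combined with $G=K_{n_1,\ldots,n_r}$ this forces $r=2$ and $n_1=n_2=1$, so $G=K_{1,1}=K_2$, which is the complete graph on two vertices. The squarefree Veronese case is the heart of the statement: $I(G)$ coincides with the squarefree Veronese ideal of degree $2$ in $S$ exactly when every product $x_ix_j$ with $i\neq j$ is a generator, that is, when every pair of distinct vertices of $G$ is an edge. This is precisely the condition that $G=K_n$ is the complete graph, equivalently that all $n_i=1$. Since $K_2$ is itself a complete graph, the two cases together give the forward implication.

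For the converse I would simply observe that if $G=K_n$ is complete, then $I(G)$ is the squarefree Veronese ideal of degree $2$ (and is principal when $n=2$), hence Cohen--Macaulay by \cite[Theorem 4.2]{HH}. I do not expect a serious obstacle here; the only point requiring care is the elementary bookkeeping that identifies the degree-$2$ squarefree Veronese ideal in $n$ variables with the complete graph $K_n$, and the verification — supplied by the full-support observation of the first paragraph — that no isolated vertices intervene to realize $I(G)$ as a squarefree Veronese ideal in fewer than $n$ variables.
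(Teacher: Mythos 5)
Your proposal is correct and follows essentially the same route as the paper: both reduce to the classification of Cohen--Macaulay polymatroidal ideals in \cite[Theorem 4.2]{HH} and then identify the degree-$2$ squarefree Veronese ideal with the edge ideal of a complete graph. The only difference is that you spell out the exclusion of the (non-squarefree) Veronese case and the reduction of the principal case to $K_2$ via Theorem~\ref{Th.2}, details which the paper's two-line proof leaves implicit.
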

\begin{proof}
Let $|V(G)|=n$. From  \cite[Theorem 4.2]{HH}, it follows that $I(G)$ is Cohen-Macaulay if and only if $I(G)$ is squarefree Veronese of degree 2 in $S=K[x_1, \ldots, x_n]$, that is, for every $1\leq i\neq j\leq n $, we have $x_ix_j \in I(G)$. Moreover, $x_ix_j \in I(G)$ for every $1\leq i\neq j\leq n $ if and only if $G$ is a complete graph. 
\end{proof}

By using Theorem~\ref{Th.2}, the above proposition can be rephrased as follows:
\begin{Corollary}
The edge ideal $I(K_{n_1, \ldots, n_r})$  is Cohen-Macaulay if and only if $n_1=\cdots=n_r=1$.
 \end{Corollary}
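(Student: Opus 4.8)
The plan is to combine the preceding Proposition with the elementary observation that a complete multipartite graph is a complete graph exactly when every part is a singleton. First I would recall that $K_{n_1,\ldots,n_r}$ is by construction a complete $r$-partite graph with $r>1$, so Theorem~\ref{Th.2} immediately gives that its edge ideal $I(K_{n_1,\ldots,n_r})$ is matroidal. This is precisely the hypothesis of the preceding Proposition, which then tells us that $I(K_{n_1,\ldots,n_r})$ is Cohen--Macaulay if and only if $K_{n_1,\ldots,n_r}$ is a complete graph. Thus the corollary is reduced to deciding when the complete $r$-partite graph $K_{n_1,\ldots,n_r}$ happens to be a complete graph on $n=n_1+\cdots+n_r$ vertices.

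Next I would verify the equivalence ``$K_{n_1,\ldots,n_r}$ is complete $\iff$ $n_1=\cdots=n_r=1$''. For the forward direction I argue by contraposition: if some $n_i\geq 2$, choose two distinct vertices $x,y\in V_i$ lying in the same maximal independent set; since vertices within the same part are non-adjacent in a multipartite graph, $\{x,y\}\notin E(K_{n_1,\ldots,n_r})$, so the graph is not complete. Conversely, if every $n_i=1$ then the graph has exactly $r$ vertices, each lying in its own part, and any two vertices belong to distinct parts and are therefore adjacent; hence $K_{1,\ldots,1}$ is the complete graph $K_r$. Combining this with the reduction of the first paragraph yields the claim.

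There is no genuine obstacle here: the statement is a direct corollary, and the only content beyond invoking the Proposition is the trivial characterization of when a complete multipartite graph degenerates to a complete graph. If one prefers to bypass the Proposition, the same conclusion follows directly from \cite[Theorem 4.2]{HH}: since $I(K_{n_1,\ldots,n_r})$ is matroidal, it is Cohen--Macaulay precisely when it is squarefree Veronese of degree $2$, that is, when $x_ix_j\in I$ for every pair $i\neq j$, which again forces each maximal independent set to be a singleton.
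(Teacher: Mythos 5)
Your proof is correct and follows essentially the same route as the paper: the Corollary is stated there precisely as a rephrasing of the preceding Proposition via Theorem~\ref{Th.2}, and your argument just makes explicit the (intended) trivial observation that a complete multipartite graph is complete if and only if every part is a singleton. Your alternative closing remark via \cite[Theorem 4.2]{HH} is likewise the same mechanism the paper's Proposition itself rests on, so it introduces nothing genuinely different.
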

 
 Now we state the main result of this section in which we discuss the general case when $t \geq 2$. 

 \begin{Theorem}\label{CM}
The $t$-path ideal $I_t(K_{n_1, \ldots, n_r}) \neq 0$ is Cohen-Macaulay if and only if $n_i \leq \lceil t/2\rceil$, for each $i=1, \ldots, r$. 
\end{Theorem}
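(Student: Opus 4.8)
The plan is to combine the matroidal property established in Theorem~\ref{Th.3} with the classification of Cohen--Macaulay polymatroidal ideals in \cite[Theorem 4.2]{HH}, and then reduce the whole statement to a purely combinatorial question about arranging vertex sets into paths. Since $I_t(K_{n_1,\ldots,n_r})$ is matroidal and nonzero, \cite[Theorem 4.2]{HH} tells us it is Cohen--Macaulay if and only if it is principal or squarefree Veronese. Recalling from Remark~\ref{Rem.principal} that the principal case is exactly $t=n_1+\cdots+n_r$, and that a principal squarefree ideal of degree $n$ is itself the squarefree Veronese ideal of degree $n$, the task collapses to a single clean equivalence: I would show that $I_t(G)$ is squarefree Veronese of degree $t$ if and only if $n_i\leq\lceil t/2\rceil$ for all $i$. (Full support, guaranteed by Remark~\ref{Rem.supp}, ensures the ambient variable set is $\{x_1,\ldots,x_n\}$, so ``squarefree Veronese of degree $t$'' is the right target.)

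The next step is to translate ``squarefree Veronese'' into a statement about paths. Because $G=K_{n_1,\ldots,n_r}$ is complete $r$-partite, two vertices are adjacent precisely when they lie in different maximal independent sets; hence a set $A$ of $t$ distinct vertices is the vertex set of some $t$-path if and only if its elements can be ordered so that no two consecutive vertices belong to the same $V_i$. Consequently $I_t(G)$ is squarefree Veronese of degree $t$ if and only if \emph{every} $t$-subset $A\subseteq V(G)$ admits such an ordering. This is exactly the colouring/arrangement problem resolved in Remark~\ref{color}, which is what makes the sufficiency direction work and is, in effect, where the genuine combinatorial content sits.

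For sufficiency, assume $n_i\leq\lceil t/2\rceil$ for every $i$. Then for any $t$-subset $A$ we have $|A\cap V_i|\leq n_i\leq\lceil t/2\rceil$ for each $i$, so Remark~\ref{color} produces an ordering of $A$ with no two consecutive vertices in the same part, i.e.\ a $t$-path on $A$. Thus every $t$-subset is a generator, $I_t(G)$ is squarefree Veronese of degree $t$, and by the reduction above it is Cohen--Macaulay. For necessity I would argue by contraposition: suppose some $n_i>\lceil t/2\rceil$. Since $I_t(G)\neq 0$, a $t$-path exists, and by Remark~\ref{Rem.1}(i) it uses at most $\lceil t/2\rceil$ vertices from $V_i$, so at least $\lfloor t/2\rfloor$ of its vertices lie outside $V_i$; in particular $n-n_i\geq\lfloor t/2\rfloor$. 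I can therefore choose a $t$-subset $A$ consisting of $\lceil t/2\rceil+1$ vertices of $V_i$ (possible as $n_i\geq\lceil t/2\rceil+1$) together with $\lfloor t/2\rfloor-1$ vertices from outside $V_i$ (possible as $n-n_i\geq\lfloor t/2\rfloor$), giving $|A|=\lceil t/2\rceil+\lfloor t/2\rfloor=t$ and $|A\cap V_i|=\lceil t/2\rceil+1$. By Remark~\ref{Rem.1}(i) this $A$ cannot be the vertex set of any $t$-path, so $I_t(G)$ fails to be squarefree Veronese and hence is not Cohen--Macaulay.

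The only real obstacle is the sufficiency direction, and it has already been absorbed into Remark~\ref{color}: producing, from the bound $|A\cap V_i|\leq\lceil t/2\rceil$, an explicit alternating arrangement of the $t$ vertices is the crux of the whole argument. The rest is bookkeeping, with the single delicate point in necessity being to confirm that a ``bad'' subset can actually be assembled; this is exactly where the standing hypothesis $I_t(G)\neq 0$ is used, since it forces $n-n_i\geq\lfloor t/2\rfloor$ and thereby guarantees enough vertices outside $V_i$.
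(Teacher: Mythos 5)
Your proposal is correct and follows essentially the same route as the paper: reduce via Theorem~\ref{Th.3}, \cite[Theorem 4.2]{HH} and Remark~\ref{Rem.principal} to the question of when $I_t(K_{n_1,\ldots,n_r})$ is squarefree Veronese of degree $t$, then use Remark~\ref{Rem.1}(i) for necessity and the arrangement argument of Remark~\ref{color} for sufficiency. The only difference is cosmetic: you spell out the counting needed to assemble the ``bad'' $t$-subset in the necessity direction (via $n-n_i\geq\lfloor t/2\rfloor$), a detail the paper leaves implicit.
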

\begin{proof}
Let $V_1, \ldots, V_r$ be the $r$-partition of $G=K_{n_1, \ldots, n_r}$ with $|V_i|=n_i$, for each $i=1, \ldots, r$. Let $n=n_1+\cdots+n_r$. From \cite[Theorem 4.2]{HH} and Remark~\ref{Rem.1} (v), it follows that if $I_t(G)$ is Cohen-Macaulay, then it is a squarefree Veronese ideal of degree $t$ in $S=K[x_1, \ldots, x_n]$, and hence $x_{i_1}\cdots x_{i_t} \in I_t(G)$, for every $1 \leq i_1<\cdots<i_t\leq n$. Moreover, from Remark~\ref{Rem.1}(i), it follows that if there exists some $V_i$ with  $n_i > \lceil t/2 \rceil$, then for any subset $T$ of $V_i$ with $|T|=\lceil t/2 \rceil+1$, there does not exist any $t$-path in $G$ that contains all the elements of $T$. Hence, if there exists some $V_i$ of $G$ with $n_i> \lceil t/2\rceil$, then $I(G)$ is not squarefree Veronese, and hence not Cohen-Macaulay. 
 
 Now assume that $n_i \leq \lceil t/2\rceil$, for each $i=1, \ldots, r$. Let $u=x_{i_1}\cdots x_{i_t}$ be a squarefree monomial in $S$. We need to show that the elements in $A:=\supp(u)$ can be interpreted as a $t$-path in $G$.  Let $A_i= A\cap V_i $ and $a_i = |A_i|$. Since $n_i \leq \lceil t/2\rceil$, we get $a_i \leq \lceil t/2 \rceil$. Without loss of generality, we may assume that the elements in $\{x_{i_1}, \ldots, x_{i_t}\}$ are arranged such that the first $a_1$ elements lie in $V_1$, the next $a_2$ elements lie in $V_2$ and so on. If $t$ is odd, then
 \[
 x_1,x_{\frac{t+1}{2}+1}, x_2, x_{\frac{t+1}{2}+2}, \ldots, x_{\frac{t-1}{2}}, x_t, x_{\frac{t+1}{2}}
 \]
 is a $t$-path in $G$, since consecutive vertices in above sequence belong to different $V_i$'s. Similarly, if $t$ is even, then 
\[
 x_1,x_{\frac{t}{2}+1}, x_2, x_{\frac{t}{2}+2}, \ldots, x_{\frac{t}{2}}, x_t
 \]
 is a $t$-path in $G$. This completes the proof.   
 \end{proof}

\section{limit depth of  $I_t(K_{n_1, \ldots, n_r})$}\label{limdepth}

Let $(R,\mm)$ be a Noetherian local ring or a standard graded algebra over a field $K$ with graded maximal ideal $\mm$. Let $I \subset R$ be an ideal, which is graded if $R$ is standard graded $K$-algebra. It is known from Brodmann \cite{B1} that $\depth (R/I^k)$ stabilizes for large $k$, that is, $\depth(R/I^k)$ is constant for $k \gg 0$. The smallest $t>0$, for which $\depth(R/I^t)= \depth(R/I^k)$ for all $k \geq t$ is called the {\em index of depth stability} of $I$ and is denoted by $\dstab(I)$, as defined in \cite{HRV}. Moreover, $\depth(I^{\dstab(I)})$ is called the {\em limit depth} of $I$ and is denoted by $\lim_{k\rightarrow \infty} \depth(R/I^k)$, see \cite{HH2}. 

 In this section we will study the $\dstab(S/I_t(K_{n_1, \ldots, n_r}))$ and  $\lim_{k\rightarrow \infty} \depth(S/I_t(K_{n_1, \ldots, n_r})^k)$. 
In \cite{HQ}, the linear relation graphs of monomial ideals were introduced to help in understanding their analytic spreads. Let $I\subset S$ be a monomial ideal with $\mathcal{G}(I)=\{u_1, \ldots, u_m\}$. The {\em linear relation graph} $\Gamma$ of $I$ is the graph with the edge set 
\[
E(\Gamma)=\{\{x_i,x_j\}: \text{ there exists } u_k, u_l \in \mathcal{G}(I) \text{ such that } x_iu_k=x_ju_l  \},
\]
and $V(\Gamma)= \bigcup_{\{x_i,x_j\} \in E(\Gamma)} \{x_i,x_j\}$.  It is known from  \cite[Lemma 4.2 and Theorem 4.1]{HQ} that if the linear relation graph of a matroidal ideal $I$ has $m$ vertices and $s$ connected components, then 
\begin{equation}\label{eqdstab}
\dstab(I)<\ell(I)=m-s+1
\end{equation}
where $\ell(I)$ denotes the analytic spread of $I$, that is, the Krull dimension of the fiber ring $\mathcal{R}(I)/\mm\mathcal{R}(I)$. Combining this with \cite[Corollary 3.5]{HRV}, one can deduce that 
\begin{equation}\label{eqlim}
\lim_{k\rightarrow \infty} \depth(S/I^k)= n-(m-s+1).
\end{equation}

 Therefore, to compute the limit depth of $t$-path ideals of complete $r$-partite graphs, it is enough to compute the number of vertices and the number of connected components in their linear relation graphs.
 
  \begin{Remark} \label{remdepthfunction}
   In \cite[Proposition 2.1]{HH2}, it is shown that for any graded ideal $I$, the $\depth (S/I^k)$ is a non-increasing function  of $k$ if all powers of $I$ have a linear resolution. This is indeed the case for matroidal ideals.  Therefore, one concludes the following: if limit depth of a matroidal ideal is $s$ and the $k$ is the smallest integer for which $\depth(S/I^k)=s$, then $\dstab (I)=k$. 
   \end{Remark}
 
  \begin{Remark} \label{remdepth}
  It follows from \cite[Theorem 2.5]{CH} that if $I$ is a fully supported matroidal ideal generated in degree $d$, then $\depth(I)=d-1$. It is shown in Remark~\ref{Rem.1} (iv) that if $I_t(K_{n_1, \ldots, n_r}) \neq 0$, then it is a fully supported ideal. This shows that whenever $I_t(K_{n_1, \ldots, n_r}) \neq 0$, we have $\depth ( I_t(K_{n_1, \ldots, n_r})) =t-1$.
  \end{Remark}
At first, we will compute limit depth of $t$-path ideals of complete bipartite graphs.  

\begin{Theorem}\label{bipartite}
Let $G=K_{p,q}$ with $n=p+q$ and $t \geq 2$. Then we have the following:
\begin{enumerate}
\item[\em{(i)}] if \; $p=\lfloor t/2 \rfloor$ and $q = \lceil t/2 \rceil$, then $\lim_{k\rightarrow \infty} \depth(S/I_t(G)^k)=n-1$ and $\dstab(I_t(G))=1$.

\item[\em{(ii)}]  if $p=\lfloor t/2 \rfloor$ and $q > \lceil t/2 \rceil$, then $\lim_{k\rightarrow \infty} \depth(S/I_t(G)^k)=\lfloor t/2 \rfloor$ and \\ $\dstab(I_t(G))=\big\lceil (q-1)/(q-\lceil t/2 \rceil) \big\rceil$.

\item [\em{(iii)}] if $p>\lfloor t/2 \rfloor$ and $q > \lceil t/2 \rceil$, then $\lim_{k\rightarrow \infty} \depth(S/I_t(G)^k)=0$  and \\ $1< \dstab (I_t(G)) <n $ if $t$ is odd, and $\lim_{k\rightarrow \infty} \depth(S/I_t(G)^k)=1$ and $1\leq \dstab (I_t(G)) <n-1 $, if $t$ is even.

\end{enumerate}
\end{Theorem}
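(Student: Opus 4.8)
The plan is to reduce every assertion to a computation with the linear relation graph $\Gamma$ of $I_t(G)$. By \eqref{eqlim} the limit depth equals $n-(m-s+1)$, where $m=|V(\Gamma)|$ and $s$ is the number of connected components of $\Gamma$, and by \eqref{eqdstab} one has $\dstab(I_t(G))<\ell(I_t(G))=m-s+1$; moreover $\depth(S/I_t(G)^k)$ is non-increasing by Remark~\ref{remdepthfunction}, so $\dstab$ is the first $k$ at which the limit is attained. Throughout I would write $V_1,V_2$ for the two parts with $|V_1|=p,\ |V_2|=q$, assume $p\le q$, and use that $I_t(G)\ne0$ forces $p\ge\lfloor t/2\rfloor$ and $q\ge\lceil t/2\rceil$, while an alternating $t$-path uses $\lfloor t/2\rfloor$ vertices from one part and $\lceil t/2\rceil$ from the other. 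The combinatorial input I would use repeatedly is that $\{z,z'\}\in E(\Gamma)$ precisely when some generator contains $z'$ but not $z$ and stays a $t$-path after replacing $z'$ by $z$; this exchange is governed by Remark~\ref{Rem.1} and the arrangement argument of Remark~\ref{color}.

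For (i), $p=\lfloor t/2\rfloor,\ q=\lceil t/2\rceil$ give $n=p+q=t$, so by Remark~\ref{Rem.principal} the ideal is principal; every power is then principal, $S/I_t(G)^k$ is a hypersurface of depth $n-1$, and hence the limit depth is $n-1$ with $\dstab=1$. For (iii) I would describe $\Gamma$ by parity. Since $p>\lfloor t/2\rfloor$ and $q>\lceil t/2\rceil$, any two vertices of the same part can be exchanged (there is always a free vertex of that part), so $\Gamma$ contains the complete graphs on $V_1$ and on $V_2$ and $m=n$. If $t$ is even every $t$-path uses exactly $t/2$ vertices from each part, so no exchange can cross the parts: there are no edges between $V_1$ and $V_2$, whence $s=2$, $\ell=n-1$, and the limit depth is $1$. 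If $t$ is odd, a path with $\lfloor t/2\rfloor$ vertices in $V_1$ can trade one of its $V_2$-vertices for an unused $V_1$-vertex, producing a cross edge; so $\Gamma$ is connected, $s=1$, $\ell=n$, and the limit depth is $0$. The bounds on $\dstab$ then follow from \eqref{eqdstab} together with Remark~\ref{remdepth}: at $k=1$ we have $\depth(S/I_t(G))=t-1$, which is $>0$ when $t$ is odd (forcing $\dstab>1$) and equals the limit $1$ exactly when $t=2$ (so only $\dstab\ge1$ is asserted in the even case).

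The substantive case is (ii), where $p=\lfloor t/2\rfloor$ and $q>\lceil t/2\rceil$. Here every $t$-path must use \emph{all} of $V_1$ together with an arbitrary $\lceil t/2\rceil$-subset of $V_2$, so with $u=\prod_{x\in V_1}x$ and $d=\lceil t/2\rceil$ one gets the factorisation $I_t(G)=u\,J$, where $J\subset R:=K[V_2]$ is the squarefree Veronese ideal of degree $d$ in $q$ variables. As every generator contains all of $V_1$, no $V_1$-variable moves, so $V(\Gamma)=V_2$ and $\Gamma|_{V_2}=K_q$; thus $m=q,\ s=1,\ \ell=q$, and the limit depth is $n-q=p=\lfloor t/2\rfloor$. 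For the exact $\dstab$ I would prove a shift lemma: since $u^k$ and $J^k$ involve disjoint variables, $(uJ)^k=(u^k)\cap(J^kS)$, and feeding the Mayer--Vietoris sequence
\[
0\to S/(uJ)^k\to S/(u^k)\oplus S/J^kS\to S/\big((u^k)+J^kS\big)\to0
\]
the values $\depth_S(S/J^kS)=\depth_R(R/J^k)+p$, $\depth_S(S/(u^k))=n-1$, and $\depth_S(S/((u^k)+J^kS))=\depth_R(R/J^k)+p-1$ yields $\depth_S(S/(uJ)^k)=\depth_R(R/J^k)+p$. Combining this with the known depth function of squarefree Veronese powers, $\depth_R(R/J^k)=\max\{0,\ q-1-k(q-d)\}$, the limit $p$ is first reached when $k\ge(q-1)/(q-d)$, giving $\dstab(I_t(G))=\lceil(q-1)/(q-\lceil t/2\rceil)\rceil$.

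The main obstacle I anticipate is pinning down the \emph{exact} $\dstab$ in (ii): the linear relation graph delivers only the limit depth and the crude bound $\dstab<\ell$, so one genuinely needs the full depth function of the intermediate powers. This is where the shift lemma and the squarefree Veronese formula $\max\{0,q-1-k(q-d)\}$ do the work, and the delicate point is verifying the three depth inputs to the Mayer--Vietoris sequence, in particular that $\depth_R(R/J^k)\le q-1$ so that the middle term realises the smaller depth and the sequence determines $\depth_S(S/(uJ)^k)$ exactly. By contrast, the limit depths in all three cases and the $\dstab$ estimates in (iii) are immediate once $\Gamma$ is described.
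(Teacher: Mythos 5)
Your proposal is correct and follows essentially the same route as the paper: case (i) via principality, case (iii) via the parity analysis of the linear relation graph $\Gamma$ together with the bounds from (\ref{eqdstab}), Remark~\ref{remdepth} and Remark~\ref{remdepthfunction}, and case (ii) via the factorization $I_t(G)=uJ$ into a principal ideal and a squarefree Veronese ideal in disjoint variables combined with the depth formula $\depth(R/J^k)=\max\{0,\,q-1-k(q-\lceil t/2\rceil)\}$ from \cite[Corollary 5.7]{HRV}. The only divergence is that where the paper cites \cite[Lemma 2.2]{HT} for the additivity $\depth(S/(uJ)^k)=\depth(R/J^k)+p$, you prove it directly with the Mayer--Vietoris sequence (correctly handling the delicate point that $\depth_R(R/J^k)\le q-1$, so the sequence pins the depth down exactly), which makes that step self-contained.
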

\begin{proof}
Let $V_1=\{x_1, \ldots, x_p\}$ and $V_2=\{y_1, \ldots, y_q\}$ be the $2$-partition of $G$. 

(i) In this case $n=p+q=t$ and $I_t(G)$ is a principal ideal and the assertion follows trivially. 

(ii) Is $p=\lfloor t/2 \rfloor$ and $q>\lceil t/2 \rceil$, then it is evident from Remark~\ref{Rem.1}(i)  that
 all vertices of $V_1$ appear in every $t$-path in $G$. Then, it follows from the definition of $\Gamma$ that $V_1 \cap V(\Gamma)=\emptyset$. It follows immediately from Remark~\ref{Rem.1}(ii) that  $\{y_i, y_j\} \in E(\Gamma)$ for all $1 \leq i \neq j \leq q$.  
Therefore $V(\Gamma) = V_2$ and $\Gamma$ is a complete graph on $q$ vertices, and has only one connected component. We conclude $\ell (I) = q$ and use the equality in (\ref{eqlim}) to compute $\lim_{k\rightarrow \infty} \depth(S/I_t(G)^k)=n-q=p=\lfloor t/2 \rfloor$.

To prove the assertion about $\dstab(I_t(G))$, we first observe that $I_t(G)=JL$, where $J=(x_1\cdots x_p)$ and $L$ is generated by all monomials of degree $\lceil t/2\rceil$ in $q$ variables. Indeed, as discussed before,  all vertices of $V_1$ appear in every $t$-path in $G$, equivalently, $x_1\cdots x_p$ divides every generator of $I_t(G)$. Moreover, for every subset $\{y_{i_1}, \ldots, y_{i_m}\}$ of $V_2$ of size $m=\lceil t/2 \rceil$, there is a $t$-path $P$ in $G$ with $V(P)=\{y_{i_1}, \ldots, y_{i_m}\} \cup V_1$, equivalently, $y_{i_1}\cdots y_{i_m} \in L$. Hence, $J$ is a principal ideal and $L$ is the squarefree Veronese ideal of degree  $\lceil t/2 \rceil$ in variables $y_1, \ldots, y_q$. Let $S_1=K[x_1, \ldots, x_p]$ and $S_2=K[y_1, \ldots, y_q]$. Then $\depth(S_1/J^k)=p-1$ and it follows from  \cite[Corollary 5.7]{HRV} that $\depth(S_2/L^k)=\max\{0,k(\lceil t/2 \rceil-q)+q-1\}$ for all $k$ and  $\lim_{k\rightarrow \infty} \depth(S_2/L^k)=0$ with $\dstab(L)=\lceil (q-1)/(q-\lceil t/2 \rceil) \rceil$.

The fact that the ideals $J$ and $L$ are generated in different sets of variables facilitates to conclude that  $(I_t(G))^k=J^kL^k$, and $\depth (S/I_t(G)^k)= \depth (S_1/J^k) + \depth(S_2/L^k) +1$, for all $k$, for example, see \cite[Lemma 2.2]{HT}. Therefore, 

\[
\depth (S/I_t(G)^k)= p+\max\{0,k(\lceil t/2 \rceil-q)+q-1\}.
\]

Since $\lim_{k\rightarrow \infty} \depth(S/I_t(G)^k)=p$ and $\dstab(L)=\lceil (q-1)/(q-\lceil t/2 \rceil) \rceil$, it follows that $\dstab (I_t(G))=\lceil (q-1)/(q-\lceil t/2 \rceil) \rceil$.

(iii) Let $p > \lceil t/2 \rceil$ and $q > \lfloor t/2 \rfloor$. Then it follows from Remark~\ref{Rem.1}(ii), that $\{x_i, x_j\} \in E(\Gamma)$, for all $1 \leq i\neq j\leq p$ and $ \{y_i, y_j\} \in E(\Gamma)$, for all $1 \leq i \neq j\leq q$.  If $t$ is  even, then every path in $G$ contains half variables from $V_1$ and the other half from $V_2$. Consequently, given any $t$-path in $G$, we cannot replace any vertex from $V_1$ with any vertex from $V_2$. This shows that $\{x_i, y_j\} \notin E(\Gamma)$. Therefore, $\Gamma$ has exactly two connected components, and $\ell(I)=p+q-2+1=n-1$. By using equality in (\ref{eqlim}), we obtain  $\lim_{k\rightarrow \infty} \depth(S/I_t(G)^k)=n-(n-1)=1$ and from (\ref{eqdstab}), we obtain $\dstab(I_t(G)) < n-1$. Remark~\ref{remdepth} gives that $\depth(I_t(G))=t-1$. Hence $\dstab(I_t(G) \geq 1$ and the equality holds only if $t=2$ due to Remark~\ref{remdepthfunction}. 

If $t$ is odd, then set $m=\lfloor t/2 \rfloor$. The sequence $x_1,y_1,x_2,y_2, \ldots ,x_m,y_m$ is a $(t-1)$-path in $G$. It can be extended to a $t$-path by either joining the edge $\{y_{m+1}, x_1\}$, or the edge $\{y_m, x_{m+1}\}$. Hence, for $u=y_{m+1}x_1y_1x_2y_2 \ldots x_my_m$ and $v=x_1y_1x_2y_2 \ldots x_my_mx_{m+1}$, we have $x_{m+1}u=y_{m+1}v$, which gives $\{x_{m+1}, y_{m+1}\} \in E(\Gamma)$. This shows that $\Gamma$ is connected. In fact, by repeating the same argument as above, one can show that $\Gamma$ is a complete graph on $n$ vertices. Therefore, $\ell(I)=n-1+1=n$ and by using the equality in (\ref{eqlim}), we obtain $\lim_{k\rightarrow \infty} \depth(S/I_t(G)^k)=0$ and from (\ref{eqdstab}), we obtain  $\dstab (I_t(G))< n$. Again from Remark~\ref{remdepth} we have $\depth(I_t(G))=t-1>0$. Hence $\dstab(I_t(G) > 1$. This finishes the proof.
\end{proof}

Now we consider the case in which $G$ is an $r$-partite graph with $ r\geq 3$. Let $d, a_1, \ldots, a_n$ be positive integers with $\sum_{i=1}^n a_i\geq d$. Then the ideal generated by all monomials $x_1^{b_1}\cdots x_1^{b_n}$ of degree $d$ with $b_i \leq a_i$, for all $i=1, \ldots, n$ is called the ideal of {\em Veronese type}, and is denoted by $I_{d;a_1, \ldots, a_n}$. In \cite{HRV}, the index of depth stability of Veronese type ideals is discussed in detail. Note that if $a_1=\cdots=a_n=1$, then $I_{d;a_1, \ldots, a_n}$ is squarefree Veronese ideal in $n$ variables. 
From Theorem~\ref{CM} and results obtained in \cite{HRV}, we conclude the following

\begin{Proposition}\label{Sqfrdepth}  
Let $t \geq 2$ and $G=K_{n_1, \ldots, n_r}$ with $|V(G)|=n$ and $n_i \leq \lceil t/2 \rceil$, for all $1 \leq i \leq r$. Then $\lim_{k\rightarrow \infty} \depth(S/I_t(G)^k)=0$ and $\dstab(I_t(G))=\lceil (n-1)/(n-t) \rceil$.
\end{Proposition}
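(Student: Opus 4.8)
The plan is to reduce the statement to the known behaviour of squarefree Veronese ideals and then quote the depth formula from \cite{HRV}. The crucial observation is that the hypothesis $n_i \leq \lceil t/2\rceil$ for all $i$ is exactly the condition of Theorem~\ref{CM}, and the proof of that theorem establishes more than Cohen--Macaulayness: every squarefree monomial $x_{i_1}\cdots x_{i_t}$ of degree $t$ in $S$ can be arranged as a $t$-path in $G$ (this is precisely the column-by-column arrangement produced in Remark~\ref{color}). Hence, under these hypotheses, $I_t(G)$ coincides with the squarefree Veronese ideal of degree $t$ in the $n$ variables $x_1,\ldots,x_n$. So first I would record this identification explicitly, since it is the only step with genuine content.

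Next I would dispose of the degenerate case: by Remark~\ref{Rem.principal}, $I_t(G)$ is principal exactly when $t=n$, in which case the denominator $n-t$ of the asserted index vanishes; so we work under the implicit assumption $t<n$, where $I_t(G)$ is a non-trivial squarefree Veronese ideal. With $I_t(G)$ so identified, I would invoke the depth formula for squarefree Veronese ideals, namely \cite[Corollary 5.7]{HRV} (the very tool used in the proof of Theorem~\ref{bipartite}(ii)), which in our parameters gives
\[
\depth(S/I_t(G)^k)=\max\{0,\, k(t-n)+n-1\}\quad\text{for all } k\geq 1.
\]
Since $t<n$, the term $k(t-n)$ tends to $-\infty$, so the right-hand side equals $0$ for all large $k$, which yields $\lim_{k\to\infty}\depth(S/I_t(G)^k)=0$. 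As a consistency check one may also note that the linear relation graph $\Gamma$ of a squarefree Veronese ideal with $t<n$ is the complete graph on all $n$ vertices, so $m=n$, $s=1$, $\ell(I_t(G))=n$, and the limit depth $n-\ell(I_t(G))=0$ follows from (\ref{eqlim}).

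Finally, for the index of depth stability I would apply Remark~\ref{remdepthfunction}: because all powers of the polymatroidal ideal $I_t(G)$ have linear resolutions, $\depth(S/I_t(G)^k)$ is non-increasing in $k$, so $\dstab(I_t(G))$ is the least $k$ for which the depth attains its limiting value $0$. From the displayed formula, $\depth(S/I_t(G)^k)=0$ if and only if $k(t-n)+n-1\leq 0$, i.e.\ if and only if $k\geq (n-1)/(n-t)$, and the smallest such integer is $\lceil (n-1)/(n-t)\rceil$. This gives $\dstab(I_t(G))=\lceil (n-1)/(n-t)\rceil$ and completes the argument.

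I expect the main (and essentially only) obstacle to be the clean verification that the hypothesis forces $I_t(G)$ to be the \emph{full} squarefree Veronese ideal; everything afterwards is a direct specialization of the cited results. This verification, however, is already contained in the proof of Theorem~\ref{CM} together with the combinatorial arrangement of Remark~\ref{color}, so the remaining steps are routine bookkeeping with the ceiling function.
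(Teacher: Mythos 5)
Your proof is correct and takes essentially the same route as the paper: the hypothesis is exactly that of Theorem~\ref{CM}, which forces $I_t(G)$ to be the squarefree Veronese ideal of degree $t$ in $n$ variables, and then both the limit depth and $\dstab(I_t(G))=\lceil (n-1)/(n-t)\rceil$ are read off from \cite[Corollary 5.7]{HRV}. The paper compresses this into two sentences; your version only adds explicit bookkeeping (the depth formula, the monotonicity argument via Remark~\ref{remdepthfunction}, and the excluded degenerate case $t=n$).
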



 In the view of Proposition~\ref{Sqfrdepth}, it is enough to consider those cases in which at least one of the $n_i$'s is greater than $\lceil t/2\rceil$. From Theorem~\ref{bipartite}, it is clear that limit depth of $I_t(G)$ depends on $t$ and how vertices are distributed among the partition sets. We see this behaviour in subsequent results too. Now we will describe limit depth and $\dstab$ for $I_3(K_{n_1, \ldots, n_r})$ when $r \geq 3$. 
 Note that since $r \geq 3$, we must have $3 \leq n=n_1+\cdots+n_r$. First we discuss the cases when $n \in \{3,4\}$. The following lemma is a consequence of Proposition~\ref{Sqfrdepth}. 
 
 \begin{Lemma}
 Let $G=K_{n_1, \ldots, n_r}$ with $r \geq 3$. Then we have the following:
 \begin{enumerate}
 \item[\em{(i)}] If $|V(G)|=3$, then $I_3(G)$ is a principal ideal and $\lim_{k\rightarrow \infty} \depth(S/I_3(G)^k)=2$ and $\dstab(I_3(G))=1$.

 \item[\em{(ii)}] If $|V(G)|=4$, then $G$ is either a complete graph on four vertices or isomorphic to $K_{2,1,1}$. Moreover, $\lim_{k\rightarrow \infty} \depth(S/I_3(G)^k)=0$ and $\dstab(I_3(G))=3$.
 \end{enumerate}
 \end{Lemma}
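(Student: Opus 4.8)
The plan is to settle both parts by enumerating the finitely many graphs involved and then invoking results already in hand, so that essentially no new computation is needed.

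For \textbf{(i)}, I would first note that $r \geq 3$ together with $|V(G)| = 3$ forces $r = 3$ and $n_1 = n_2 = n_3 = 1$, so that $G = K_{1,1,1}$ is the triangle. Here $t = 3 = n_1 + n_2 + n_3$, so Remark~\ref{Rem.principal} gives that $I_3(G)$ is principal; explicitly $I_3(G) = (x_1 x_2 x_3)$. Since $(x_1 x_2 x_3)^k$ is a nonzero monomial, hence a nonzerodivisor in the domain $S$, the quotient $S/I_3(G)^k$ is a hypersurface ring, Cohen-Macaulay of dimension $n-1$. Consequently $\depth(S/I_3(G)^k) = n - 1 = 2$ for every $k \geq 1$; being constant in $k$, this yields at once $\lim_{k\rightarrow \infty}\depth(S/I_3(G)^k) = 2$ and $\dstab(I_3(G)) = 1$.

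For \textbf{(ii)}, I would enumerate the complete $r$-partite graphs on four vertices with $r \geq 3$. Writing $4 = n_1 + \cdots + n_r$ with each $n_i \geq 1$ and $r \geq 3$ leaves only two possibilities: either $r = 4$, forcing $n_1 = \cdots = n_4 = 1$ and $G = K_{1,1,1,1}$, the complete graph on four vertices, or $r = 3$, forcing (after reordering the parts) $n_1 = 2$ and $n_2 = n_3 = 1$, that is, $G \cong K_{2,1,1}$. In both cases every $n_i \leq 2 = \lceil t/2 \rceil$, so Theorem~\ref{CM} applies and shows that $I_3(G)$ is Cohen-Macaulay, hence squarefree Veronese of degree $t = 3$ in $n = 4$ variables. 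Proposition~\ref{Sqfrdepth} then applies verbatim and gives $\lim_{k\rightarrow \infty}\depth(S/I_3(G)^k) = 0$ together with $\dstab(I_3(G)) = \lceil (n-1)/(n-t)\rceil = \lceil 3/1 \rceil = 3$.

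The content here is purely organizational, so I do not anticipate a genuine obstacle. The single subtlety worth flagging is why part (i) cannot simply be read off from Proposition~\ref{Sqfrdepth} the way part (ii) is: in the triangle case one has $n = t$, and the $\dstab$ formula $\lceil (n-1)/(n-t)\rceil$ degenerates through division by zero. This is precisely the degenerate principal case, which is why it must be handled by the elementary hypersurface argument above rather than by quoting the proposition.
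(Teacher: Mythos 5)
Your proof is correct, and for part (ii) it is exactly the paper's route: the paper gives no separate proof of this lemma, saying only that it ``is a consequence of Proposition~\ref{Sqfrdepth}'', which amounts to your enumeration ($G=K_{1,1,1,1}$ or $G\cong K_{2,1,1}$, so every $n_i\leq 2=\lceil 3/2\rceil$) followed by Theorem~\ref{CM} and the formula $\dstab(I_3(G))=\lceil (n-1)/(n-t)\rceil=3$. Where you genuinely improve on the source is part (i). The paper's blanket attribution of the whole lemma to Proposition~\ref{Sqfrdepth} cannot be taken literally there: for $G=K_{1,1,1}$ one has $n=t=3$, so the hypotheses $n_i\leq\lceil t/2\rceil$ of that proposition are satisfied, yet its conclusions fail in this case---the limit depth is $2$, not $0$, and the $\dstab$ formula degenerates through division by zero. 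Your handling of this (each $I_3(G)^k$ is principal, generated by a nonzerodivisor, so $S/I_3(G)^k$ is a hypersurface ring, Cohen--Macaulay of depth $n-1=2$ for every $k$, whence the depth function is constant and $\dstab(I_3(G))=1$) is precisely the right fix, and it parallels how the paper itself disposes of the analogous degenerate situation in Theorem~\ref{bipartite}(i), where the principal case is declared to follow ``trivially''. In short, your write-up is correct and, if anything, more careful than the paper's own treatment.
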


 In the following text, $\mm$ denotes the unique graded maximal ideal in the polynomial ring $S$ whose variables correspond to the vertices of the graph $G$. 

\begin{Proposition}
 Let $G=K_{n_1, \ldots, n_r}$ with $r \geq 3$. Then $\lim_{k\rightarrow \infty} \depth(I(G)^k)=0$ and $\dstab (I(G))=2$.
\end{Proposition}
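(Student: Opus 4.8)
The plan is to follow the same linear-relation-graph strategy used for the bipartite case in Theorem~\ref{bipartite}, now specialized to $t=2$ and $r\ge 3$. First I would note that $I(G)$ is matroidal by Theorem~\ref{Th.2}, so that the machinery recorded in \eqref{eqdstab} and \eqref{eqlim} is available. The limit depth will then be read off directly from the linear relation graph $\Gamma$ of $I(G)$, while the equality $\dstab(I(G))=2$ will be obtained by combining the lower bound coming from Remark~\ref{remdepth} with a direct verification that $\depth(S/I(G)^2)=0$, invoked through Remark~\ref{remdepthfunction}.

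The first step is to describe $\Gamma$ explicitly. Since the generators of $I(G)$ are exactly the edges of $G$, a direct analysis of the defining relation $x_iu_k=x_ju_l$ shows that $\{x_i,x_j\}\in E(\Gamma)$ precisely when $x_i$ and $x_j$ admit a common neighbour in $G$, the witnessing identity being $x_j(x_ix_a)=x_i(x_jx_a)$ for a common neighbour $x_a$. For $G=K_{n_1,\dots,n_r}$ with $r\ge 3$ this condition always holds: two vertices in the same part $V_s$ have a common neighbour in any other part, while two vertices in distinct parts $V_s,V_{s'}$ have a common neighbour in a third part $V_{s''}$, which exists and is non-empty precisely because $r\ge3$. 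Hence $\Gamma$ is the complete graph on all $n$ vertices, so it has $m=n$ vertices and $s=1$ connected component. Then \eqref{eqdstab} gives $\ell(I(G))=m-s+1=n$, and \eqref{eqlim} yields $\lim_{k\to\infty}\depth(S/I(G)^k)=n-n=0$. This is exactly where $r\ge 3$ is needed: for $r=2$ two vertices in different parts have no common neighbour and $\Gamma$ splits into two components, as in Theorem~\ref{bipartite}(iii).

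It remains to show $\dstab(I(G))=2$. Since all powers of a polymatroidal ideal have linear resolution, $\depth(S/I(G)^k)$ is non-increasing, so by Remark~\ref{remdepthfunction} the quantity $\dstab(I(G))$ is the least $k$ with $\depth(S/I(G)^k)=0$. Remark~\ref{remdepth} gives $\depth(S/I(G))=t-1=1>0$, so $\dstab(I(G))\ge 2$. For the reverse inequality I would exhibit an embedded prime at the second power: choose vertices $a\in V_1$, $b\in V_2$, $c\in V_3$ (possible since $r\ge 3$), which are pairwise adjacent, and set $f=x_ax_bx_c$. Then $f\notin I(G)^2$ for degree reasons, while for every vertex $v$ one can pair $v$ with one of $a,b,c$ lying in a part different from that of $v$ and pair the remaining two of $a,b,c$ together; each resulting pair is an edge, so $x_vf$ is a product of two edges and hence $x_vf\in I(G)^2$ for all $v$. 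This shows $\mm=(I(G)^2:f)\in\Ass(S/I(G)^2)$, so $\depth(S/I(G)^2)=0$ and therefore $\dstab(I(G))=2$.

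The main obstacle is this last step, namely establishing $\depth(S/I(G)^2)=0$: the limit-depth value follows mechanically once $\Gamma$ is identified, but pinning down $\dstab$ exactly requires producing the witness monomial $f=x_ax_bx_c$ and checking the factorization of $x_vf$ into two edges case by case according to which part contains $v$. The hypothesis $r\ge 3$ enters decisively here as well, since it simultaneously guarantees a triangle $a,b,c$ in three distinct parts and, for each $v$, the existence of at least one of $a,b,c$ in a part different from that of $v$.
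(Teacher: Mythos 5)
Your proof is correct, but it reaches the two conclusions by a partly different route than the paper. For $\dstab(I(G))=2$, the paper also reduces, via Remark~\ref{remdepth} and Remark~\ref{remdepthfunction}, to showing $\depth(S/I(G)^2)=0$, but it then invokes the criterion of \cite[Theorem 2.1]{HH3}: the second power of an edge ideal has depth zero if and only if the graph contains a triangle such that every other vertex has a neighbour in it, a condition that is immediate for $K_{n_1,\ldots,n_r}$ with $r\geq 3$. You instead prove this depth statement by hand, exhibiting the socle witness $f=x_ax_bx_c$ (the product over a triangle meeting three distinct parts) and checking $(I(G)^2:f)=\mm$, so that $\mm\in\Ass(S/I(G)^2)$; this is in effect a self-contained proof of the relevant direction of the Herzog--Hibi criterion, and it mirrors the colon computations the paper itself performs later for $3$-path ideals in Lemma~\ref{v<5} and Theorem~\ref{v>5}. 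For the limit depth, the paper gets $0$ for free from monotonicity of the depth function (Remark~\ref{remdepthfunction}): once $\depth(S/I(G)^2)=0$, all higher powers have depth $0$. You instead compute the linear relation graph $\Gamma$, correctly identifying its edges with pairs of vertices admitting a common neighbour (whence $\Gamma$ is complete exactly because $r\geq 3$), and apply \eqref{eqlim}; this is valid and matches the technique of Theorem~\ref{bipartite} and Theorem~\ref{t=4}, but it is logically redundant here, since your own colon computation already forces the limit to be $0$. In short, what your version buys is independence from the external reference \cite{HH3}; what the paper's version buys is brevity, at the cost of outsourcing the key step.
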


\begin{proof}
From Remark~\ref{remdepth}, we know that $\depth(I(G))=1$. Then by following Remark~\ref{remdepthfunction}, it is enough to show that $\depth(I(G)^2)=0$. Given a graph $H$, in \cite[Theorem 2.1]{HH3}, the equivalent condition for $\depth(I(H)^2)=0$ is described as follows: $\depth(I(H)^2)=0$ if and only if $H$ contains a cycle of length 3 and every other vertex of $G$ has a neighbour in this cycle. It can be easily seen that $G$ satisfies this condition. Let   $V_1, \ldots, V_r$ be the $r$-partition of $G$ and since $r \geq 3$, we can choose $x_1 \in V_1$,  $x_2 \in V_2$, and $x_3 \in V_3$. Then the subgraph induced by $x_1, x_2$ and $x_3$ is cycle of length 3 in $G$. Moreover, it follows from the definition of $G$ that any $x_j \in V(G)$ is adjacent to at least one of the vertices in  $\{x_1, x_2, x_3\}$. Hence $\depth(I(G)^2)=0$, as required. 
\end{proof}
 
  \begin{Theorem}\label{v>5}
 Let $G=K_{n_1, \ldots, n_r}$ with $r \geq 3$ and $|V(G)|\geq 5$. Then \\$\lim_{k\rightarrow \infty} \depth(I_3(G)^k)=0$ and $\dstab (I_3(G))=2$.
 \end{Theorem}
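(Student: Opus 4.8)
The plan is to follow the template of Lemma~\ref{v<5}: reduce the computation of both the limit depth and of $\dstab(I_3(G))$ to a single depth computation for the square $I_3(G)^2$, and then settle that by producing an explicit monomial $u$ with $I_3(G)^2 : u = \mm$.

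First I would record the soft inputs. Since $I_3(G) \neq 0$, Theorem~\ref{Th.3} shows it is matroidal and Remark~\ref{Rem.supp} shows it is fully supported, so Remark~\ref{remdepth} gives $\depth(S/I_3(G)) = 3-1 = 2$. Because $I_3(G)$ is polymatroidal, all of its powers have linear resolutions, so by Remark~\ref{remdepthfunction} the function $k \mapsto \depth(S/I_3(G)^k)$ is non-increasing. Consequently it is enough to prove $\depth(S/I_3(G)^2) = 0$: this forces the limit depth to be $0$, and since $\depth(S/I_3(G)) = 2 > 0$ while $\depth(S/I_3(G)^2) = 0$ already equals the limit, Remark~\ref{remdepthfunction} yields $\dstab(I_3(G)) = 2$. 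The case where all but one of the $V_i$ are singletons is exactly Lemma~\ref{v<5}, so the construction below is meant to subsume it uniformly.

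The core step is to exhibit a monomial $u$ of degree $5$ with $u\,x_i \in I_3(G)^2$ for every vertex $x_i$. I would use the description of $I_3(G)$ as the ideal generated by the squarefree degree-$3$ monomials $x_ax_bx_c$ whose support is not contained in a single $V_j$ (three distinct vertices span a $3$-path in $G$ precisely when they do not all lie in one maximal independent set). Since products of polymatroidal ideals are polymatroidal with Minkowski-sum base set, a degree-$6$ monomial $m$ lies in $I_3(G)^2$ as soon as every variable occurs in $m$ with exponent at most $2$ and every part $V_j$ contributes at most $4$ to $\deg m$: such an $m$ splits into two triples, each of three distinct vertices not all from one $V_j$, i.e. into a product of two $3$-paths. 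I then choose $u$ to be a product of five distinct vertices using at most three vertices from any one $V_j$. Such a choice exists whenever $r \geq 3$ and $n \geq 5$, because $\sum_j \min(n_j,3) \geq 5$ in every such case (the extreme profile $(n-2,1,1)$ already gives $3+1+1 = 5$). For this $u$ and any $x_i \in V_j$, the monomial $u\,x_i$ is squarefree except possibly at $x_i$, so all exponents are $\leq 2$, the part $V_j$ now contributes at most $4$, and every other part contributes at most $3$; by the splitting criterion $u\,x_i \in I_3(G)^2$. Hence $I_3(G)^2 : u = \mm$ and $\depth(S/I_3(G)^2)=0$, as needed.

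The main obstacle is the splitting criterion itself — that every degree-$6$ monomial with all exponents $\leq 2$ and all part-contributions $\leq 4$ factors as a product of two $3$-paths. I would establish it either by invoking the polymatroidal product structure of $I_3(G)^2$ (its minimal generators are exactly the exponent vectors arising as sums of two $\{0,1\}$-bases of the Veronese-type polymatroid defining $I_3(G)$), or, for a self-contained argument, by a short case analysis over the partitions of $6$ into parts of size $\leq 4$: place the two copies of each doubled vertex into different triples, then distribute the remaining single vertices so that neither triple is monochromatic. Everything else — the existence of $u$ and the bookkeeping on exponents and part-contributions — is routine counting.
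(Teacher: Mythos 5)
Your proposal is correct, and its skeleton coincides with the paper's: both use Remark~\ref{remdepth} and Remark~\ref{remdepthfunction} to reduce everything to showing $\depth(S/I_3(G)^2)=0$, and both do this by exhibiting a degree-$5$ monomial $u$ with $I_3(G)^2:u=\mm$ (degree reasons give $u\notin I_3(G)^2$ for free). Where you differ is the combinatorial core. The paper splits into cases: if at most one part has size $\geq 2$ it invokes Lemma~\ref{v<5}, and otherwise it picks the specific witness $u=x_1x_2x_3x_4x_5$ with $x_1,x_2\in V_i$, $x_3,x_4\in V_j$, $x_5\in V_k$ and verifies $ux_m\in I_3(G)^2$ by writing down explicit pairs of $3$-paths in each of three subcases. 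You instead prove a uniform membership criterion --- any degree-$6$ monomial with exponents $\leq 2$ and with each part contributing $\leq 4$ factors as a product of two non-monochromatic triples --- and then take any five distinct vertices with at most three per part (which exists since $\sum_j\min(n_j,3)\geq 5$ when $r\geq 3$, $n\geq 5$). This buys uniformity: your argument subsumes Lemma~\ref{v<5} and the paper's case distinction in one stroke, at the price of having to prove the splitting lemma. One caveat: your first suggested justification of that lemma, via ``$I_3(G)^2$ is polymatroidal with Minkowski-sum base set,'' is not a proof --- knowing that the generators of the square are exactly the sums of two bases is precisely the statement whose exponent-vector description you need, so this route is circular. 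Your second, self-contained route does work: with $d$ doubled vertices ($0\leq d\leq 3$), the doubled vertices must sit in both triples, and the part-contribution bound $\leq 4$ always leaves enough room to distribute the $6-2d$ single vertices so that neither triple lies in one $V_j$ (e.g., for $d=2$ with both doubled vertices in the same part, that part can absorb no further vertex; for $d=0$ a part meeting the support in $3$ or $4$ vertices is split $2+1$ or $2+2$ across the triples). So your proof stands, provided you carry out that short case analysis rather than appeal to polymatroidality.
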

 
 \begin{proof}
From Remark~\ref{remdepth}, we know that $\depth(I_3(G))=2$, and then by following Remark~\ref{remdepthfunction}, it is enough to show that $\depth(I_3(G)^2)=0$.  Let $V_1, \ldots, V_r$ be the $r$-partition of $G$. 

First, assume that $|V_1|=\cdots=|V_{r-1}|=1$. If $|V_r|=1$ or $|V_r|=2$, then the assertion follows from  Proposition~\ref{Sqfrdepth}. More precisely, If $|V_r|=1$, then $|V(G)|=r$, and if $|V_r|=2$, then $|V(G)|=r+1$. Since $r \geq 5$, in both cases, we have $\dstab(I_3(G))=\lceil (s-1)/(s-3) \rceil= 2$ for $s=r, r+1$. Finally, if $|V_r| >2$, then take $x_1 \in V_1$, $x_2 \in V_2$ and $x_3, x_4 , x_5 \in V_r$, and set $u:=x_1x_2x_3x_4x_5$. Then for any $x_i \in V_r$, we have $ux_i=(x_ix_2x_3)(x_4x_1x_5) \in I_3(G)^2$ because $x_i,x_2,x_3$ and $x_4,x_1,x_5$ are 3-paths in $G$. Furthermore, for any $x_i \in V(G) \setminus V_r$, we have $ux_i=(x_1x_3x_2)(x_4x_ix_5) \in I_3(G)^2$ because $x_1,x_3,x_2 $ and $x_4,x_i,x_5$ are 3-paths in $G$. Therefore, $I_3(G)^2:u=\mm$, and hence $\depth (I_3(G)^2)=0$, as required.

Now, assume that $|V_i|, |V_j| \geq 2$, for some $1 \leq i \neq j \leq r$. Take $x_1,x_2 \in V_i$, $x_3, x_4 \in V_j$, and $x_5 \in V_k$. Set $u:= x_1x_2x_3x_4x_5$; then $u \notin I_3(G)^2$. Then $I_3(G)^2:u=\mm$. Indeed, for any $x_m \in V_i$, we have $ux_m\in I_3(G)^2$ because $x_1, x_5,x_2$ and $x_3,x_m,x_4$ are 3-paths in $G$. Similarly, for any $x_m \in V_j$, we have $ux_m\in I_3(G)^2$. Finally, if $x_m \in V(G) \setminus( V_i \cup V_j )$, then again $ux_m\in I_3(G)^2$ because $x_1,x_m,x_2$ and $x_3, x_5, x_4$ are 3-paths in $G$. This shows that $\depth (I_3(G)^2)=0$, as claimed.
   \end{proof}
Now we give the final result of this section which shows that in the case of $r \geq 3$ and $t \geq 3$, if the number of vertices in each partition set is big enough, then limit depth of $I_t(G)$ is 0.

\begin{Theorem}\label{t=4}
 Let $G=K_{n_1, \ldots, n_r}$ with $r \geq 3$ and $n_i \geq \lceil t/2 \rceil$ for all $1 \leq i \leq r$. Then  $\lim_{k\rightarrow \infty} \depth(I_t(G)^k)=0$ and $1 <\dstab(I_t(G))<n$ for $t \geq 3$ . 
\end{Theorem}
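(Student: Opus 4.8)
The plan is to compute the limit depth through the linear relation graph, in the same spirit as Theorem~\ref{bipartite} and Theorem~\ref{v>5}, rather than by directly exhibiting an associated prime. Since $n_i \geq \lceil t/2\rceil$ for all $i$ and $r \geq 3$, one first checks that $I_t(G) \neq 0$: a $t$-path exists by taking $\lceil t/2\rceil$ vertices from one part and $\lfloor t/2\rfloor$ from another and arranging them as in Remark~\ref{color}. Hence $I_t(G)$ is a fully supported matroidal ideal by Theorem~\ref{Th.3} and Remark~\ref{Rem.supp}. The heart of the argument is to prove that the linear relation graph $\Gamma$ of $I_t(G)$ is the complete graph on all $n = |V(G)|$ vertices; granting this, $\Gamma$ has $m = n$ vertices and $s = 1$ connected component, so (\ref{eqdstab}) yields $\ell(I_t(G)) = m - s + 1 = n$ and (\ref{eqlim}) yields $\lim_{k\rightarrow\infty}\depth(I_t(G)^k) = n - n = 0$.

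To show $\Gamma$ is complete, I would verify that $\{x_a, x_b\} \in E(\Gamma)$ for every pair of distinct vertices by producing a linear relation $x_b\, u_k = x_a\, u_l$ with $u_k, u_l \in \mathcal{G}(I_t(G))$. Concretely, I will build a $t$-path $P$ containing $x_b$ but not $x_a$ and then replace $x_b$ by $x_a$ to obtain a second $t$-path $P'$; this gives $x_b\, u_{P'} = x_a\, u_P$, and hence the edge. If $x_a$ and $x_b$ lie in the same part $V_i$, I take $P$ to be any $t$-path through $x_b$ avoiding $x_a$ (built from $x_b$ together with $t-1$ vertices drawn from two other parts and arranged by Remark~\ref{color}) and use Remark~\ref{Rem.1}(ii) for the substitution. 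If $x_a \in V_i$ while $x_b \notin V_i$, I instead build $P$ using only the $r-1 \geq 2$ parts different from $V_i$ — possible because $2\lceil t/2\rceil \geq t$ and each such part has at least $\lceil t/2\rceil$ vertices — so that $P$ misses $V_i$ entirely, and then Remark~\ref{Rem.1}(iii) permits replacing $x_b$ by $x_a$. In both cases the substituted configuration is again a $t$-path, so the desired edge of $\Gamma$ is produced.

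For the stability index, the upper bound $\dstab(I_t(G)) < n$ is immediate from (\ref{eqdstab}) since $\ell(I_t(G)) = n$. For the lower bound, Remark~\ref{remdepth} gives $\depth(I_t(G)) = t-1 > 0$ because $t \geq 3$, whereas the limit depth equals $0$; since the depth function is non-increasing in $k$ (Remark~\ref{remdepthfunction}), the value $0$ cannot already be attained at $k = 1$, so $\dstab(I_t(G)) > 1$. This is exactly the mechanism used in Theorem~\ref{bipartite}(iii) and Theorem~\ref{v>5}.

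I expect the main obstacle to be the different-part case of the claim $\Gamma = K_n$: one must be sure that a $t$-path through $x_b$ can be chosen to miss the \emph{entire} part $V_i$ containing $x_a$ (not merely to avoid $x_a$), since only then does the part-respecting substitution of Remark~\ref{Rem.1}(iii) apply and yield a genuine $t$-path after inserting $x_a$. Confirming that the counting $2\lceil t/2\rceil \geq t$ together with $n_i \geq \lceil t/2\rceil$ and $r \geq 3$ always permits such an arrangement via Remark~\ref{color} is the delicate point; the same-part case and the two depth-stability bounds are then routine.
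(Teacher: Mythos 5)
Your proposal is correct, and its skeleton matches the paper's: show that the linear relation graph $\Gamma$ of $I_t(G)$ is the complete graph on all $n$ vertices, deduce $\ell(I_t(G))=n$ and hence the limit depth $0$ from (\ref{eqlim}), get $\dstab(I_t(G))<n$ from (\ref{eqdstab}), and get $\dstab(I_t(G))>1$ from the positivity of the depth at $k=1$ (Remark~\ref{remdepth}) combined with the non-increasing depth function (Remark~\ref{remdepthfunction}). The genuine difference is in how the edge $\{x_a,x_b\}\in E(\Gamma)$ is produced. The paper uses a single uniform construction: choose one set $A$ with $|A|=t$, $x_a\in A$, $x_b\notin A$, and the \emph{strict} bound $|A\cap V_k|<\lceil t/2\rceil$ for \emph{every} $k$, so that after the swap $B=(A\setminus\{x_a\})\cup\{x_b\}$ still satisfies $|B\cap V_k|\leq\lceil t/2\rceil$ and Remark~\ref{color} yields both paths. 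You instead split into two cases: for $x_a,x_b$ in the same part, a path through $x_b$ avoiding $x_a$ plus the substitution of Remark~\ref{Rem.1}(ii); for $x_a,x_b$ in different parts, a path built inside two parts that misses the part of $x_a$ entirely, plus Remark~\ref{Rem.1}(iii). Your version is in fact more robust: the paper's uniform choice is infeasible when $r=3$ and $t=4$, since one cannot place $4$ vertices in $3$ parts with at most $\lceil t/2\rceil-1=1$ vertex per part (and more generally whenever $t$ is even and $r\bigl(\lceil t/2\rceil-1\bigr)<t$), so the paper's argument as written has a gap precisely there, which your case split avoids because only the part receiving the new vertex needs the strict bound. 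The counting at your flagged ``delicate point'' is sound: taking $\lceil t/2\rceil$ vertices from the part of $x_b$ and $\lfloor t/2\rfloor$ from a third part gives class sizes bounded by $\lceil t/2\rceil$, so Remark~\ref{color} produces the required $t$-path missing the part of $x_a$, and Remark~\ref{Rem.1}(iii) then applies.
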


 \begin{proof}
 From Remark~\ref{remdepth}, we conclude that $\depth(I_t(G))=t-1>0$. Let $|V(G)|=n$. Due to  (\ref{eqdstab}) and (\ref{eqlim}), it is enough to prove that the linear relation graph $\Gamma$ of $I_t(G)$ has one connected component and $|V(\Gamma)|=n$. To prove this, we will show that $\Gamma$ is a complete graph with $V(\Gamma)=V(G)$. Let $V_1, \ldots, V_r$ be the $r$-partition of $G$. Let $x_i,x_j \in V(G)$, for some $i \neq j$. We can choose $A\subset V(G)$ such that $|A|=t$, $x_i \in A$, $x_j \notin A$ and $|A \cap V_k| <  \lceil t/2 \rceil$, for all $k=1, \ldots, r$. Such a choice of $A$ is possible because $n_i \geq \lceil t/2 \rceil$ and $r \geq 3$. Take $B=( A \setminus \{x_i\} )\cup \{x_j\}$. Then $|B|=t$ and $|B \cap V_k| \leq  \lceil t/2 \rceil$, for all $k=1, \ldots, r$. Then by following the similar construction of the path at the end of the proof of Theorem~\ref{CM}, we obtain a $t$-path $P$ in $G$ with vertices in $A$ and a $t$-path $P'$ in $G$ with vertices in $B$. Let $u$ be the monomial  in $\mathcal{G} (I_t(G))$ that corresponds to the $t$-path $P$ and  $v$ be the monomial  in $\mathcal{G} (I_t(G))$ that corresponds to the $t$-path $P'$. Then $x_iv=x_ju$ and $\{x_i,x_j\} \in E(\Gamma)$, as required. 

  \end{proof}
 
 \section*{Acknowledgement}
The authors are grateful to the referee for careful reading of the paper and the valuable comments and suggestions.

 

\begin{thebibliography}{99}

\bibitem{AY} R.~R.~Asghar, S. Yassemi, \emph{On the weakly polymatroidal property of the edge ideals of hypergraphs}, Comm. in Algebra, {\bf 42} (2014), 1011--1021.



\bibitem{BH}  S.~Bandari, J.~Herzog, \emph{Monomial localizations and polymatroidal ideals},  European  J. Combin.   {\bf 34} (2013),  752--763.


\bibitem{BHK} R.~Bouchat ,  H.~T.~H\`a, A.~O.~Keefe, \emph{ Path ideals of rooted trees and their grad Betti numbers},  J. Combin. Theory Ser. A, {\bf118 (8)} (2011),  2411--2425.

\bibitem{B1} M.~Brodmann, \emph{The asymptotic nature of the analytic spread}, Math. Proc. Cambridge Philos. Soc. {\bf 86} (1979), 35--39.
\bibitem{CGMPT} D.~Campos, R.~Gunderson, S.~Morey, C.~Paulsen, T.~Polstra, \emph{Depths and Cohen--Macaulay properties of path ideals}, J. Pure Appl. Algebra, {\bf 218 (8)} (2014), 1537--1543.

\bibitem{CW}H.~Chiang-Hsieh, \emph{Some arithmetic properties of matroidal ideals}, Comm. in Algebra, {\bf 38} (2010), 944--952.

 
\bibitem{CH} A.~Conca, J.~Herzog, \emph{Castelnuovo--Mumford regularity of products of ideals}, Collect. Math. {\bf 54(2)}, (2003), 137--152.

\bibitem{CN}A.~Conca, E.~D.~Negri, \emph{M-sequences, graph ideals, and ladder ideals of linear type}, J. Algebra, {\bf 211(2)}(1999), 599--624.



\bibitem{E} J.~Edmonds,  \emph{Submodular functions, matroids, and certain polyhedra}. In: R.~Guy, H.~Hanani, N.~Sauer, J.~Schonheim, (eds.) Combinatorial Structures and Their Applications. Gordon and Breach, New York (1970).

\bibitem{HT1} J.~He, A.~V.~Tuyl, \emph{Algebraic Properties of the Path Ideal of a Tree}, Comm. in Algebra, {\bf 38(5)} (2010) , 1725-1742.


\bibitem{HH2} J.~Herzog, T.~Hibi, \emph{The depth of powers of an ideal}, J. Alg. {\bf 291} (2005), 534--550.


\bibitem{HH} J.~Herzog, T.~Hibi, \emph{Cohen--Macaulay polymatroidal ideals}, European J. Combin. {\bf 27(4)} (2006), 513--517.


\bibitem{HH3} J.~Herzog, T.~Hibi, \emph{Bounding the socles of powers of squarefree monomial ideals}, In: Commutative algebra and noncommutative algebraic geometry. {\bf II}, (2015), 223--229.

\bibitem{HHdis}  J.~Herzog, T.~Hibi, \emph{Discrete polymatroids},  J. Algebr. Comb., {\bf 16} (2002), 239--268.


\bibitem{HQ} J.~Herzog, A.~A.~Qureshi, \emph{Persistence and stability properties of power of ideals},  J. Pure Appl. Algebra, {\bf 219 (3)} (2015), 530--542.



\bibitem{HRV} J.~Herzog, A.~Rauf, M.~Vladoiu, \emph{The stable set of associated prime ideals of a polymatroidal ideal}, J. Algebr. Comb. {\bf 37 (2)} (2013), 289--312,



\bibitem{HY} J.~Herzog, Y.~Takayama, \emph{Resolutions by mapping cones}, The Roos Festschrift Vol. 2. Homology Homotopy Appl. {\bf 4} (2002), 277--294.

\bibitem{HV} J.~Herzog, M.~Vladoiu, \emph{Squarefree monomial ideals with constant depth function}, J. Pure Appl. Algebra, {\bf 217} (2013), 1764--1772.

 \bibitem{HT} L.~T.~Hoa, N.~D.~Tam, \emph{On some invariants of a mixed product of ideals}, Arch. Math. {\bf 94(4)} (2010), 327--337.
 
\bibitem{O} J.~G.~Oxley, \emph{Matroid Theory}, Oxford University Press, Oxford, New York, (1992).


\bibitem{RV} G.~Restuccia, R.~H.~Villarreal, \emph{ On the normality of monomial ideals of mixed products}, Comm. in Algebra, {\bf 29(8)} (2001), 3571--3580,



\bibitem{V} R.~H.~Villarreal,\emph{ Rees cones and monomial rings of matroids}, Linear Algebra Appl., {\bf 428} (2008), 2933--2940.

\bibitem{W} D.~J.~A.~Welsh, \emph{Matroid Theory}, Academic Press, London, (1976).



\end{thebibliography}
\end{document}